\newcounter{mylisti} \newcounter{mylistii}
\newcounter{nest}
\newcommand{\defaultlabel}{}
\newenvironment{mylist}[1]{%
  \addtocounter{nest}{1}
  \ifthenelse{\value{nest}=1}{%
    \renewcommand{\defaultlabel}{(\roman{mylisti})\hfill}}{%
    \renewcommand{\defaultlabel}{(\alph{mylistii})\hfill}}
  \begin{list}{\defaultlabel}{%
      \ifthenelse{\value{nest}=1}{\usecounter{mylisti}}{%
        \usecounter{mylistii}}
      
      \addtolength{\itemsep}{0.5ex}
      \settowidth{\labelwidth}{#1}
      \setlength{\leftmargin}{\labelwidth}
      \addtolength{\leftmargin}{\labelsep}}}{\addtocounter{nest}{-1}
\end{list}}
\newenvironment{mylista}[1]{%
  \addtocounter{nest}{1}
  \ifthenelse{\value{nest}=1}{%
    \renewcommand{\defaultlabel}{(\alph{mylisti})\hfill}}{%
    \renewcommand{\defaultlabel}{(\roman{mylistii})\hfill}}
  \begin{list}{\defaultlabel}{%
      \ifthenelse{\value{nest}=1}{\usecounter{mylisti}}{%
        \usecounter{mylistii}}
      
      \addtolength{\itemsep}{0.5ex}
      \settowidth{\labelwidth}{#1}
      \setlength{\leftmargin}{\labelwidth}
      \addtolength{\leftmargin}{\labelsep}}}{\addtocounter{nest}{-1}
\end{list}}
\newcommand{\be}{\ensuremath{\mathbb E}}
\newcommand{\bn}{\ensuremath{\mathbb N}}
\newcommand{\br}{\ensuremath{\mathbb R}}
\newcommand{\cA}{\ensuremath{\mathcal A}}
\newcommand{\cB}{\ensuremath{\mathcal B}}
\newcommand{\cI}{\ensuremath{\mathcal I}}
\newcommand{\cJ}{\ensuremath{\mathcal J}}
\newcommand{\cK}{\ensuremath{\mathcal K}}
\newcommand{\cL}{\ensuremath{\mathcal L}}
\newcommand{\cS}{\ensuremath{\mathcal S}}
\newcommand{\cT}{\ensuremath{\mathcal T}}
\newcommand{\cJt}{\ensuremath{\widetilde{\cJ}}}
\newcommand{\lt}{\ensuremath{\tilde{l}}}
\newcommand{\mt}{\ensuremath{\widetilde{m}}}
\newcommand{\xt}{\ensuremath{\tilde{x}}}
\newcommand{\yt}{\ensuremath{\tilde{y}}}
\newcommand{\Et}{\ensuremath{\widetilde{E}}}
\newcommand{\Ft}{\ensuremath{\widetilde{F}}}
\newcommand{\Tt}{\ensuremath{\widetilde{T}}}
\newcommand{\abs}[1]{\lvert #1\rvert}
\newcommand{\bigabs}[1]{\big\lvert #1\big\rvert}
\newcommand{\Bigabs}[1]{\Big\lvert #1\Big\rvert}
\newcommand{\biggabs}[1]{\bigg\lvert #1\bigg\rvert}
\newcommand{\sabs}[1]{\left\lvert #1\right\rvert}
\newcommand{\cl}[1]{\overline{#1}}
\newcommand{\dist}{\ensuremath{\mathrm{dist}}}
\newcommand{\infin}[1]{{[#1]}^{\omega}}
\newcommand{\ip}[2]{\ensuremath{\langle #1,#2\rangle}}
\newcommand{\intp}[1]{\ensuremath{\lfloor #1\rfloor}}
\newcommand{\ceil}[1]{\ensuremath{\lceil #1\rceil}}
\newcommand{\join}{\vee}
\newcommand{\meet}{\wedge}
\newcommand{\norm}[1]{\lVert #1\rVert}
\newcommand{\bignorm}[1]{\big\lVert #1\big\rVert}
\newcommand{\Bignorm}[1]{\Big\lVert #1\Big\rVert}
\newcommand{\biggnorm}[1]{\bigg\lVert #1\bigg\rVert}
\newcommand{\snorm}[1]{\left\lVert #1\right\rVert}
\newcommand{\spn}{\ensuremath{\mathrm{span}}}
\newcommand{\cspn}{\ensuremath{\overline{\mathrm{span}}}}
\newcommand{\symdif}{\bigtriangleup}
\newcommand{\co}{\mathrm{c}_0}
\newcommand{\continuum}{\mathfrak{c}}
\newcommand{\vare}{\varepsilon}
\newcommand{\varf}{\varphi}
\renewcommand{\geq}{\geqslant}
\renewcommand{\leq}{\leqslant}
\newcommand{\ds}{\displaystyle}
\newcommand{\ts}{\textstyle}
\newcommand{\ie}{\textit{i.e.,}\ }
\newcommand{\cf}{\textit{cf.}\ }
\newtheorem{thm}{Theorem}
\newtheorem*{mainproblem*}{Problem}
\newtheorem{lem}[thm]{Lemma}
\newtheorem{prop}[thm]{Proposition}
\newtheorem{cor}[thm]{Corollary}
\newtheorem{problem}[thm]{Problem}
\theoremstyle{definition}
\theoremstyle{remark}
\newtheorem*{rem}{Remark}
\begin{document}

\allowdisplaybreaks

\title[Banach spaces with $2^{\continuum}$ closed operator
  ideals]{Banach spaces for which the space of operators has
  $2^{\continuum}$ closed ideals}

\author{D.~Freeman}
\address{Department of Mathematics and Statistics, St Louis
 University, St Louis, MO 63103, USA}
\email{daniel.freeman@slu.edu}

\author{Th.~Schlumprecht}
\address{Department of Mathematics, Texas A\&M University, College
  Station, TX 77843, USA and Faculty of Electrical Engineering, Czech
  Technical University in Prague,  Zikova 4, 166 27, Prague}
\email{schlump@math.tamu.edu}

\author{A.~Zs\'ak}
\address{Peterhouse, Cambridge, CB2 1RD, UK}
\email{a.zsak@dpmms.cam.ac.uk}

\keywords{Operator ideals, Rosenthal's $X_{p,w}$ space, factorizing
  operators}
\subjclass[2010]{Primary: 47L20. Secondary: 47B10, 47B37}
\thanks{The first author was supported by grant 353293 from the Simons
  Foundation and the third author's research was supported by NSF
  grant DMS-1764343.}
\begin{abstract}
  We  formulate general conditions which imply that $\cL(X,Y)$, the
  space of operators from a Banach space $X$ to a Banach space $Y$,
  has $2^{\continuum}$ closed ideals where $\continuum$ is the
  cardinality of the continuum. These results are applied to
  classical sequence spaces and Tsirelson type spaces. In
  particular, we prove that the cardinality of the set of closed
  ideals in $\cL(\ell_p\oplus\ell_q)$ is exactly $2^{\continuum}$
  for all $1<p<q<\infty$, which in turn gives an alternate proof of
  the recent result of Johnson and Schechtman that $\cL(L_p)$ also
  has $2^\continuum$ closed ideals for $1<p\neq 2<\infty$.
\end{abstract}

\maketitle

\section{Introduction}

Given Banach spaces $X$ and $Y$, we call a subspace $\cJ$ of the space
$\cL(X,Y)$ of bounded operators an \emph{ideal }if $ATB\in \cJ$ for
all $A\in\cL(Y)$, $T\in\cJ$ and $B\in\cL(X)$. In the case that $X=Y$,
this coincides with the standard algebraic definition of $\cJ$ being 
an ideal in the algebra of bounded operators $\cL(X)$. In this paper
we will only be considering closed ideals. For example, if $X$ and $Y$
are any Banach spaces, then the space of compact operators from $X$ to
$Y$ and the space of strictly singular operators from $X$ to $Y$ are
both closed ideals in $\cL(X,Y)$.  In the case of $X=Y=\ell_p$, the
compact and strictly singular operates coincide and they are the only
closed ideal in $\cL(\ell_p)$ other than the trivial cases of $\{0\}$
and the entire space $\cL(\ell_p)$. For $p\neq 2$, the situation for
$L_p$ is very different from $\ell_p$. If $X$ contains a complemented
subspace $Z$ such that $Z$ is isomorphic to $Z\oplus Z$, then the
closure of the set of operators in $\cL(X)$ which factor through $Z$
is a closed ideal, and moreover the map that associates this closed
ideal with the isomorphism class of $Z$ is injective. In the case
$1<p<\infty$ with $p\neq 2$, there  are infinitely many (even
uncountably many) distinct complemented subspaces of $L_p$ which are
isomorphic to their square~\cite{bourgain-rosenthal-schechtman:81},
and thus there are infinitely many distinct closed ideals in
$\cL(L_p)$.

Obviously, constructing infinitely many closed ideals for
$\cL(\ell_p\oplus\ell_q)$ or $\cL(\ell_p\oplus \co)$ with
$1\leq p<q<\infty$ requires different techniques than just considering
complemented subspaces, and it was a long outstanding question from
Pietsch's book~\cite{pietsch:80} whether these spaces have infinitely
many distinct closed ideals. For the cases $1\leq p<q<\infty$, the
closures of the set of operators which factor through $\ell_p$ and the
operators which factor through $\ell_q$ are distinct closed ideals
(indeed, the only maximal ideals) in $\cL(\ell_p\oplus\ell_q)$, and
all other proper closed ideals in $\cL(\ell_p\oplus\ell_q)$ correspond
to closed ideals in $\cL(\ell_p,\ell_q)$.  Progress on constructing
new ideals in $\cL(\ell_p,\ell_q)$ proceeded through building finitely
many ideals at a time (see~\cite{sari-schlump-tomczak-troitsky:07}
and~\cite{schlump:12}) until it was shown using finite-dimensional
versions of Rosenthal's $X_{p,w}$ spaces that there is chain of a
continuum of distinct closed ideals in $\cL(\ell_p,\ell_q)$ for all
$1<p<q<\infty$~\cite{sz:18}. For $1<p<\infty$,
$p\neq 2$, $\ell_p\oplus \ell_2$ is isomorphic to a complemented
subspace of $L_p$, and thus there are at least a continuum of
closed ideals in $\cL(L_p)$.  Other new constructions for building
infinitely many closed ideals soon followed. Wallis
observed~\cite{wallis:15} that the techniques of~\cite{sz:18} extend 
to prove the existence of a chain of a continuum of closed ideals for
$\cL(\ell_p,\co)$ in the range $1<p<2$, and for $\cL(\ell_1,\ell_q)$
in the range $2<q<\infty$. Then, using ordinal indices, Sirotkin and
Wallis proved that there is an $\omega_1$-chain of closed ideals in
$\cL(\ell_1,\ell_q)$ for $1<q\leq\infty$ as well as in
$\cL(\ell_1,\co)$ and in $\cL(\ell_p,\ell_\infty)$ for
$1\leq p<\infty$~\cite{sirotkin-wallis:16}. Using matrices with the
Restricted Isometry Property (RIP), both
chains and anti-chains of a continuum of distinct closed ideals were
constructed in $\cL(\ell_p,\co)$, $\cL(\ell_p,\ell_\infty)$, and
$\cL(\ell_1,\ell_p)$ for all $1<p<\infty$~\cite{fsz:17}.

Recently, using the infinite-dimensional $X_{p,w}$ spaces of Rosenthal
and almost disjoint sequences of integers, Johnson and Schechtman
proved that there are $2^{\continuum}$ distinct closed ideals in
$\cL(L_p)$ for $1<p<\infty$ with $p\neq 2$~\cite{js:20}. In
particular, the cardinality of the set of closed ideals in $\cL(L_p)$
is exactly $2^{\continuum}$.

The goal for this paper is to present a general method for proving
when $\cL(X,Y)$ contains $2^{\continuum}$ distinct closed ideals for
some Banach spaces $X$ and $Y$ with unconditional finite-dimensional
decompositions (UFDD).  Given a collection of operators
$(T_N)_{N\in\infin{\bn}}$ from $X$ to $Y$ indexed by the set of all
infinite subsets of the natural numbers, we give sufficient conditions
for there to exist an infinite subset $L$ of $\bn$ so that if
$\cS\subset\infin{L}$ is a set of pairwise almost disjoint subsets of
$L$, then for all $\cA,\cB\subset S$, if $M\in\cA\setminus\cB$, the
operator $T_M$ is not contained in the smallest closed ideal
containing $\{T_N:\,N\in\cB\}$. Thus, $\cL(X,Y)$ contains
$2^\continuum$ closed ideals.  We are able to apply this method to
prove in particular that the cardinality of the set of closed ideals
in $\cL(\ell_p\oplus\ell_q)$ is exactly $2^{\continuum}$ for all
$1<p<q<\infty$. It follows at once that $\cL(L_p)$ contains exactly
$2^\continuum$ closed ideals for $1<p\neq 2<\infty$, and thus we have
another proof of the aforementioned result of Johnson and
Schechtman~\cite{js:20}. It is worth pointing out that they construct
closed ideals using operators that are not even strictly singular (and
on the other hand, their ideals do not contain projections onto
non-Hilbertian subspaces). By contrast, our $2^\continuum$ closed
ideals are small in the sense that they consist of finitely strictly
sigular operators.

In~\cite{fsz:20} it was shown that there are
$2^{\continuum}$ distinct closed ideals in $\cL(\ell_p,\co)$, 
$\cL(\ell_p,\ell_\infty)$ and $\cL(\ell_1,\ell_p)$ for all
$1<p<\infty$. In this article, we will show that this result can also
be obtained by our general construction.

Although our initial goals were to construct closed ideals between
classical Banach spaces, the generality of our approach allows us to
construct $2^{\continuum}$ closed ideals in $\cL(X,Y)$ when $X$ and
$Y$ are exotic Banach spaces such as for example $p$-convexified
Tsirelson spaces. In~\cite{bkl:20} it was shown that the projection
operators in Tsirelson and Schreier spaces generate a continuum of
distinct closed ideals. So again, an interesting distinction between
these two methods is that the operators we use to generate ideals are
finitely strictly singular whereas the projections used
in~\cite{bkl:20} are clearly not even strictly singular.

The paper is organised as follows. In the next section we give general
conditions on Banach spaces $X$ and $Y$ that ensure that $\cL(X,Y)$
contains $2^\continuum$ closed ideals. We also prove two further
results giving criteria that help with verifying those general
conditions. Each one of these two results have applications that we
present in the following two sections. In the final section we give
further remarks and state some open problems.

\section{General Conditions for having $2^{\continuum}$ closed ideals in
  $\cL(X,Y)$}
\label{sec:general-condition}

Let $X$ and $Y$ be Banach spaces and let $\cT$ be a subset of
$\cL(X,Y)$, the space of all bounded linear operators from $X$ to
$Y$. The \emph{closed ideal generated by $\cT$ }is the smallest closed
ideal in $\cL(X,Y)$ containing $\cT$ and is denoted by
$\cJ^\cT(X,Y)$. That is, $\cJ^\cT(X,Y)$ is the closure in $\cL(X,Y)$
of the set
\[
\bigg\{ \sum_{j=1}^n A_j T_j B_j:\,n\in\bn,\ (A_j)_{j=1}^n\subset
\cL(Y),\ (T_j)_{j=1}^n\subset\cT,\ (B_j)_{j=1}^n\subset\cL(X)\bigg\}
\]
consisting of finite sums of operators factoring through members of
$\cT$. When $\cT$ consists of a single operator $T\in\cL(X,Y)$, then
we write $\cJ^T(X,Y)$ instead of $\cJ^{\{T\}}(X,Y)$.

In~\cite{fsz:17}, for each $1<p<\infty$, a collection
$(T_N)_{N\subset\bn}\subset \cL(\ell_p,\co)$ of operators was
constructed such that $\cJ^{T_M}(\ell_p,\co)\neq\cJ^{T_N}(\ell_p,\co)$
whenever $M\symdif N$ is infinite. For a non-empty family $\cA$ of
subsets of $\bn$, let $\cJ_\cA$ be the closed ideal of $\cL(\ell_p,\co)$
generated by $\{T_N:\,N\in\cA\}$. There are at most a continuum of
closed ideals in $\cL(\ell_p,\co)$ that are generated by a single
operator. However, it was observed in~\cite{fsz:20} that if $\cS$ is
an almost disjoint family of cardinality~$\continuum$ consisting of
infinite subsets of $\bn$, then 
$\{\cJ_\cA:\,\cA\subset\cS,\ \cA\neq\emptyset\}$ is a lattice of
$2^{\continuum}$ distinct closed ideals in $\cL(\ell_p,\co)$.

In this section, we will present a general condition which implies
that $\cL(X,Y)$ has $2^{\continuum}$ closed ideals in the following
framework in which the above example also sits.

We are given two Banach spaces $X$ and $Y$ which are assumed to have
\emph{unconditional finite-dimensional decompositions }(UFDDs) $(E_n)$
and $(F_n)$, respectively. By this we mean that $E_n$ is a
finite-dimensional subspace of $X$ for each $n\in\bn$ and that each
element of $x$ can be written in a unique way as
$x=\sum_{n\in\bn}x_n$ with $x_n\in E_n$ for each $n\in \bn$ and that
$\sum_{n\in\bn} x_n$ converges unconditionally. We can therefore think
of the elements $x\in X$ being sequences $(x_n)$ with $x_n\in E_n$,
which we call the \emph{$n$-component }of $x$, for each $n\in\bn$.
 
As in the case of unconditional bases, this implies that for
$N\subset\bn$, the map
\[
P^X_N\colon X\to X,\quad  (x_n)_{n\in\bn}\mapsto (x_n)_{n\in N}
\]
($(x_n)_{n\in N}$ is identified with the element in $X$ whose
$m$-component vanishes for $m\in\bn\setminus N$) is well-defined and
uniformly bounded. It follows that for some $C>0$ we have 
$\bignorm{\sum_{n\in\bn}\sigma_nx_n}\leq C\bignorm{\sum_{n\in\bn}x_n}$
for all $(x_n)\in X$ and all $(\sigma_n)\in\{\pm1\}^\bn$. In this case
we say that $(E_n)$ is a \emph{$C$-unconditional finite-dimensional
  decomposition }(or \emph{$C$-unconditional FDD}) of $X$. After
renorming $X$, we can (and will) assume that $\norm{P^X_N}=1$ for a
non-empty $N\subset\bn$ and that moreover 
\begin{equation}
  \label{E:1.1}
  \Bignorm{\sum_{n\in\bn} x_n}=\Bignorm{\sum_{n\in\bn} \sigma_nx_n}
\end{equation}
for all $(x_n)\in X$ and all $(\sigma_n)\in\{\pm1\}^\bn$. We denote
for $N\subset \bn$ the image of $X$ under $P^X_N$ by $X_N$. Thus 
$X_N=P^X_N(X)=\cspn\bigcup_{n\in N}E_n$ is $1$-complemented in $X$
and $(E_n:\,n\in N)$ is a $1$-unconditional FDD of $X_N$. Similarly,
for the space $Y$ with UFDD $(F_n)$ we define $P^Y_N$ and $Y_N$ for
every $N\subset \bn$. We further assume that $\norm{P^Y_N}=1$ for
every non-empty $N\subset\bn$ and that $(F_n)$ is a $1$-unconditional
FDD of $Y$.

For each $n\in\bn$ we are given a linear operator
$T_n\colon E_n\to F_n$ and  we assume that the linear operator
\[
\ts T\colon\spn\bigcup_{n\in\bn}E_n\to\spn\bigcup_{n\in\bn}F_n, \quad
(x_n)\mapsto (T_n(x_n))
\]
extends to a bounded operator $T\colon X\to Y$. We then define for
$N\subset \bn$, the \emph{diagonal operator }$T_N\colon X_N\to Y_N$ by
$T_N=T\circ P^X_N=P^Y_N\circ T$. Note that $\norm{T_N}\leq\norm{T}$.

Our goal is to formulate conditions which imply that the following
holds for some $\Delta>0$.
\begin{equation}
  \label{E:2.1} 
  \forall M,N\in\infin{\bn}\quad\text{ if $M\setminus N\in\infin{\bn}$
    then  $\dist(T_M,\cJ^{T_N})\geq\Delta$.}
\end{equation}
Using an observation in~\cite{js:20}, we can conclude that $\cL(X,Y)$
has $2^{\continuum}$ closed ideals assuming that~\eqref{E:2.1} holds.

\begin{prop}
  \label{P:2.1}
  Let $X$, $Y$ and $(T_n)$ be as above, and assume that
  condition~\eqref{E:2.1} holds for some $\Delta>0$. Let
  $\cS\subset\infin{\bn}$ have cardinality $\continuum$ and consist of
  pairwise almost  disjoint sets. For $\cA\subset\cS$, let $\cJ_{\cA}$
  be the closed  ideal of $\cL(X,Y)$ generated by $\{T_N:\,N\in\cA\}$.
  Then if $\cA,\cB\subset\cS$ with $\cA\not=\cB$, then
  $\cJ_{\cA}\neq\cJ_{\cB}$. In particular, the cardinality of the set
  of closed ideals of $\cL(X,Y)$ is $2^{\continuum}$.
\end{prop}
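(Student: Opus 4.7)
The plan is to exploit the almost-disjoint-family observation from \cite{js:20} to bootstrap the single-operator separation in \eqref{E:2.1} to separation for ideals generated by arbitrary subfamilies of $\cS$. Given $\cA\neq\cB\subset\cS$, pick $M$ in the symmetric difference; without loss of generality $M\in\cA\setminus\cB$. Since $T_M\in\cJ_\cA$ by construction, it suffices, and I in fact aim to prove the quantitative statement $\dist(T_M,\cJ_\cB)\geq\Delta$, which immediately yields $T_M\notin\cJ_\cB$ and hence $\cJ_\cA\neq\cJ_\cB$.

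The crux is a factorization step. By definition, every element of $\cJ_\cB$ is approximated in operator norm by finite sums $S=\sum_{j=1}^k A_j T_{N_j} B_j$ with $A_j\in\cL(Y)$, $B_j\in\cL(X)$, and $N_1,\dots,N_k\in\cB$. Setting $N=N_1\cup\cdots\cup N_k$, the UFDD structure gives
\[
T_{N_j} = T\circ P^X_{N_j} = T\circ P^X_N\circ P^X_{N_j} = T_N\circ P^X_{N_j}
\]
because $N_j\subset N$; so each $T_{N_j}$, and hence $S$, lies in the single ideal $\cJ^{T_N}$.

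Almost disjointness of $\cS$ now enters: since $M,N_1,\dots,N_k\in\cS$ with $M\neq N_j$, each $M\cap N_j$ is finite, so $M\cap N$ is finite and $M\setminus N\in\infin{\bn}$. Hypothesis \eqref{E:2.1} then gives $\dist(T_M,\cJ^{T_N})\geq\Delta$, and therefore $\norm{T_M-S}\geq\Delta$. Because every element of $\cJ_\cB$ is a norm-limit of such sums $S$, passing to the limit yields $\dist(T_M,\cJ_\cB)\geq\Delta$, as claimed.

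For the cardinality assertion, there are $2^\continuum$ subsets $\cA\subset\cS$, yielding at least $2^\continuum$ distinct closed ideals $\cJ_\cA$. The matching upper bound follows from separability of $X$ and $Y$ (a consequence of the countably-indexed UFDDs): $\abs{\cL(X,Y)}\leq\continuum$, so $\cL(X,Y)$ admits at most $2^\continuum$ subsets of any kind. No single step here is a genuine obstacle; the one nontrivial idea --- due to Johnson--Schechtman --- is the \emph{aggregation} of the finitely many index sets $N_j$ appearing in any given approximating sum into their union $N$, which lets \eqref{E:2.1} apply in one stroke to the entire finite sum rather than only to each summand separately.
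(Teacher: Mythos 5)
Your proposal is correct and follows essentially the same route as the paper: pick $M\in\cA\setminus\cB$, aggregate the finitely many index sets $N_j$ of any approximating sum into $N=\bigcup_j N_j$, use almost disjointness to see $M\setminus N$ is infinite, and invoke~\eqref{E:2.1}. The only cosmetic difference is that you factor $T_{N_j}=T_N\circ P^X_{N_j}$ (absorbing the projection into $B_j$) while the paper uses $T_{N_j}=P^Y_{N_j}\circ T_N$ (absorbing it into $A_j$); both are valid, and your explicit mention of the $2^\continuum$ upper bound via separability is a harmless addition the paper leaves implicit.
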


\begin{proof}
  Let $\cA$ and $\cB$ be two different subsets of $\cS$. Without loss
  of generality, we assume that there is an $M\in\cA\setminus\cB$. We
  claim that $T_M\notin\cJ_{\cB}$, and that actually
  $\dist(T_M,\cJ_{\cB})\geq\Delta$.

  Indeed, let $n\in\bn$, $(A_j)_{j=1}^n\subset\cL(Y)$,
  $(B_j)_{j=1}^n\subset\cL(X)$ and $(N_j)_{j=1}^n\subset\cB$. Put
  $N=\bigcup_{j=1}^nN_j$. It follows that
  \[
  \sum_{j=1}^n A_j\circ T_{N_j}\circ B_j=  \sum_{j=1}^nA_j\circ
  P^{Y}_{N_j}\circ T_N\circ B_j\in\cJ^{T_N}\ .
  \]
  Since $M\setminus N$ is infinite, it follows from~\eqref{E:2.1} that
  \[
  \Bignorm{\sum_{j=1}^n A_j\circ T_{N_j}\circ B_j- T_M}\geq\Delta\ .
  \]
  Since $\cJ_{\cB}$ is the closure of the set of operators of the form 
  $\sum_{j=1}^n A_j\circ T_{N_j}\circ B_j$ with $n\in\bn$,
  $(A_j)_{j=1}^n\subset\cL(Y)$, $(B_j)_{j=1}^n\subset\cL(X)$ and
  $(N_j)_{j=1}^n\subset\cB$, we deduce our claim.
 \end{proof} 

In order to  separate $T_M$ from $\cJ^{T_N}$ if $M\setminus N$ is
infinite, the following condition  is sufficient.

\begin{subequations}
  \label{E:2.2} 
  \begin{align}
    &\text{\hspace{-2em}For each $n\in\bn$ there exist $l_n\in\bn$ and
      vectors $\big(x_{n,j}\big)_{j=1}^{l_n}\subset S_{E_n}$,}\notag\\
    &\text{\hspace{-2em}$\big(y^*_{n,j}\big)_{j=1}^{l_n}\subset
      S_{F^*_n}$ so that}\notag\\[2ex]
    & y^*_{n,j}\big(T_n(x_{n,j})\big)\geq 1\quad\text{for $n\in\bn$
      and $j=1,2,\dots,l_n$},\\
    & \lim_{%
      \begin{subarray}{c}
        m\to\infty\\[0.5ex]
        m\in M\setminus N
      \end{subarray}}\frac{1}{l_m} \sum_{i=1}^{l_m}
    \bignorm{T_N\circ B(x_{m,i})}=0\\
    &\qquad\text{whenever $M, N\in\infin{\bn}$ satisfy $M\setminus
      N\in\infin{\bn}$, and $B\in\cL(X)$.}\notag
  \end{align}
\end{subequations}
Indeed, for $n\in\bn$ we define the following functional
$\Psi_n\in\cL(X,Y)^*$ by
\[
\Psi_n(S)= \frac{1}{l_n}\sum_{j=1}^{l_n}
y^*_{n,j}\big(S(x_{n,j})\big),\quad\text{for $S\in\cL(X,Y)$\ .}
\]
Given $M,N\in\infin{\bn}$ with $M\setminus N\in\infin{\bn}$, we let
$\Psi$ be a $w^*$-accumulation point of
$(\Psi_m:\,m\in M\setminus N)$. It follows from~(\ref{E:2.2}a) that
\[
\Psi(T_M)\geq\liminf_{m\in M\setminus N} \Psi_m(T_M)\geq 1
\]
and for any $A\in\cL(Y)$ and $B \in\cL(X)$ it follows
from~(\ref{E:2.2}b) that
\begin{align*}
  \bigabs{\Psi(AT_N B)} &\leq\limsup_{m\in M\setminus N}
  \Bigabs{\frac1{l_m}\sum_{i=1}^{l_m} y^*_{m,i}
    \big(AT_NB(x_{m,i})\big)}\\
  &=\limsup_{m\in M\setminus N}
  \Bigabs{\frac1{l_m}\sum_{i=1}^{l_m}A^*y^*_{m,i}\big(T_NB(x_{m,i})\big)}\\
  &\leq\norm{A}\limsup_{m\in M\setminus N}
  \frac1{l_m}\sum_{i=1}^{l_m}\norm{T_NB(x_{m,i})}=0\ .
\end{align*}
Since $\norm{\Psi_n}\leq 1$ for all $n\in\bn$, it follows that
$\norm{\Psi}\leq 1$, which in turn implies condition~\eqref{E:2.1}
with $\Delta=1$.

\begin{rem}
  Some extension of the above result is possible. Assume for example
  that~\eqref{E:2.2} holds and that $U$ is an isomorphism of $Y$ into
  another Banach space $Z$. Then $\cL(X,Z)$ also has at least
  $2^\continuum$ distinct closed ideals. Indeed, by Hahn--Banach,
  there are functionals $z^*_{n,j}\in Z^*$ such that
  $U^*\big(z^*_{n,j}\big)=y^*_{n,j}$ for all $n\in\bn$ and
  $j=1,2,\dots,l_n$, and moreover,
  $C=\sup_{n,j}\norm{z^*_{n,j}}<\infty$. If we now define
  $\Psi_n\in\cL(X,Z)^*$ as above but replacing $y^*_{n,j}$ with
  $z^*_{n,j}$, then the above argument will show that
  condition~\eqref{E:2.1} holds with $\Delta=1/C$ if we replace $T_N$
  with $U\circ T_N$ for every $N\subset\bn$.
\end{rem}
  
We now want to formulate conditions on the spaces $X$ and $Y$ and the
operators $T_n\colon E_n\to F_n$, $n\in\bn$, which imply that
condition~\eqref{E:2.2} is satisfied. From now on we assume that for
each $n\in\bn$, the spaces $E_n$ and $F_n$ have $1$-unconditional,
normalized bases $(e_{n,j})_{j=1}^{\dim(E_n)}$ and
$(f_{n,j})_{j=1}^{\dim(F_n)}$ with coordinate functionals
$(e^*_{n,j})_{j=1}^{\dim(E_n)}$ and $(f^*_{n,j})_{j=1}^{\dim(F_n)} $,
respectively. \label{page:unc-basis-set-up}

We write for $n\in\bn$ the operator $T_n\colon E_n\to F_n$ as
\[
T_n\colon E_n\to F_n,\quad T_n(x)=\sum_{j=1}^{\dim(F_n)} x^*_{n,j} (x)
f_{n,j}\ ,
\]
where $x^*_{n,j}\in E_n^*$ for $n\in\bn$ and
$1\leq j\leq\dim(F_n)$. In applications, we will define the $T_n$ by
choosing the $x^*_{n,j}$ so that
\begin{equation}
  \label{E:2.2a} 
  \text{the operator }T\colon X\to Y, \quad(x_n)\mapsto (T(x_n)),
  \text{ is well defined and bounded.}
\end{equation} 
We secondly demand that $\dim(F_n)=l_n$ and that $y^*_{n,j}=f^*_{n,j}$
for $n\in\bn$ and $j=1,2,\dots,l_n$. Thus, in order to
obtain~(\ref{E:2.2}a), we require
\begin{equation}
  \label{E:2.2b}
  x^*_{n,j}(x_{n,j})\geq 1\quad \text{for all $n\in \bn$ and
    $j=1,2,\dots,l_n$.}
\end{equation}
Finally, in order to satisfy~(\ref{E:2.2}b) we will ensure that for
$m\in\bn$ and any operator $B\in\cL(E_m,X_{\bn \setminus\{m\}})$ with
$\norm{B}\leq 1$, it follows that 
\begin{equation}
  \label{E:2.2c}
  \frac1{l_m} \sum_{i=1}^{l_m} \bignorm{T_{N\setminus\{m\}}B(x_{m,i})}
  \leq\vare_m\ ,
\end{equation}
where $(\vare_m)$ is a sequence in $(0,1)$ decreasing to $0$ not
depending on $B$. Now $B$ can be written as the sum
$B=B^{(1)}+B^{(2)}$, where $B^{(1)}\in\cL(E_m,X_{\{1,2,\dots,m-1\}})$
and $B^{(2)}\in\cL(E_m,X_{\bn\setminus\{1,2,\dots,m\}})$.

To force that~\eqref{E:2.2c} holds for $B^{(1)}$ with $\vare_m/2$
instead of $\vare_m$, is not very hard: it will be  enough to ensure
that $l_m$ is very large compared to $\dim(X_{\{1,2,\dots, m-1\}})$
and that (see the proof of Proposition~\ref{P:2.2.alt} below)
$\frac1{l_m}\sup_{\pm}\bignorm{\sum_{i=1}^{l_m} \pm x_{m,i}}$
decreases to $0$ for increasing $m$. To also ensure the necessary
estimates for $B^{(2)}$, we will assume the following slightly
stronger condition.
\begin{align}
  \label{E:2.2d}
  &\lim_{m\to\infty}l_m=\infty\quad\text{and}\quad
  \lim_{l\to\infty}\sup_{m\in\bn,\ l_m\geq l}\frac{\varf_m(l)}l=0,
  \text{ where }\\
  & \varf_m(l)=\sup\Big\{ \Bignorm{\sum_{i\in A} \sigma_i x_{m,i}}:\,
  A\subset\{1,\dots,\ell_m\},\ \abs{A}\leq l,\ (\sigma_i)_{i\in A}
  \subset\{\pm1\}\Big\}.\notag
\end{align}
To ensure that~\eqref{E:2.2c} holds for $B^{(2)}$ is more complicated
and will be done in two steps. The second one of these two steps is
more straighforward: it will be enough to assume that
$T_{\bn\setminus\{1,2,\ldots m\}}$ maps vectors with small coordinates
into vectors with small norm (see condition~(a) in
Proposition~\ref{P:2.2.alt} for the precise statement). The first step
is then to assume (see condition~(b) in Proposition~\ref{P:2.2.alt})
that the following set
\[
\left\{(n,j):\,n>m,\ 1\leq j\leq l_n,\ \bigabs{x^*_{n,j}(
  B^{(2)}x_{m,i})}>\delta\textrm{ for some }1\leq i\leq l_m\right\}
\]
has small cardinality compared to $l_m$. In many situations,
guaranteeing that this set has small cardinality relative to $l_m$ is
the trickiest part, as $B^{(2)}$ is an arbitrary norm-one
operator. However, in Lemmas~\ref{L:2.3} and~\ref{L:RIP}  we present
conditions which imply this result and are stated in terms of only
basic properties of the sequences $(x_{n,j})$ and $(x^*_{n,j})$ as
well as the Banach spaces $X$ and $Y$.

Of course, since for any $N\in\infin{\bn}$, $X_N$ and $Y_N$ are
complemented subspaces of $X$ and $Y$, respectively, we can pass to
subsequences  $(E_{k_n})$, $(F_{k_n})$ and $(T_{k_n})$ for which we
are able to verify~\eqref{E:2.1}, in order to conclude that the
lattice of closed ideals of $\cL(X,Y)$ is of cardinality
$2^{\continuum}$. This follows from the following observation whose
verification is routine. Suppose that $V$ and $W$ are complemented
subspaces of $X$ and $Y$, respectively. For a closed ideal $\cJ$ in
$\cL(V,W)$, let $\cJt$ be the closure in $\cL(X,Y)$ of the set of
operators of the form $\sum_{j=1}^nA_jS_jB_j$, where $n\in\bn$,
$(A_j)_{j=1}^n\subset\cL(W,Y)$, $(S_j)_{j=1}^n\subset\cJ$ and
$(B_j)_{j=1}^n\subset\cL(X,V)$. Then $\cJt$ is a closed ideal in
$\cL(X,Y)$ and the map $\cJ\mapsto\cJt$ is injective.

\begin{prop}
  \label{P:2.2.alt}
  Assume that the spaces $X$ and $Y$, their $1$-unconditional FDDs
  $(E_n)$ and $(F_n)$ and the operators $T_n\colon E_n\to F_n $,
  $n\in\bn$, satisfy conditions~\eqref{E:2.2a},~\eqref{E:2.2b}
  and~\eqref{E:2.2d}. Assume, moreover, that the following conditions
  hold.
  \begin{mylista}{(m)}
  \item
    \label{E:2.3d.alt}
    For all $\vare>0$ and all $M\in\infin{\bn}$ there is a $\delta>0$
    and $N\in\infin{M}$ so that
    \[
    \forall x\in B_{X_N} \text{ if
      $\sup_{n\in N, 1\leq j\leq l_n}\abs{x^*_{n,j}(x)}\leq\delta$,
      then $\norm{T_N(x)}<\vare$.}
    \]
  \item
    \label{E:2.3e.alt}
    For all $\delta,\vare>0$ and all $M\in\infin{\bn}$ there are
    $m\in M$ and $N\in\infin{M}$ so that for every
    $B\in\cL(E_{m},X_N)$ with $\norm{B}\leq 1$ we have that
    \[
    \sabs{\left\{(n,j):\,n\in N,\ 1\leq j\leq l_n,\ \abs{x^*_{n,j}(
        Bx_{m,i})}>\delta\textrm{ for some }1\leq i\leq
      l_m\right\}}<\vare l_m\ .
    \]
  \end{mylista}
  Then there is a subsequence $(k_n)$ of $\bn$ so that for
  $\Et_n=E_{k_n}$, $\Ft_n=F_{k_n}$, $\Tt_n=T_{k_n}$, $\lt_n=l_{k_n}$,
  $(\xt_{n,j})_{j=1}^{\lt_n}=(x_{k_n,j})_{j=1}^{l_{k_n}}\subset\Et_n$
  and  
  $(\yt^*_{n,j})_{j=1}^{\lt_n}=(f^*_{k_n,j})_{j=1}^{l_{k_n}}\subset\big(\Ft_n\big)^*$,
  condition~\eqref{E:2.2} is satisfied. Hence, $\cL(X,Y)$ contains
  $2^\continuum$ closed ideals.
\end{prop}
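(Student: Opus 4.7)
The plan is to apply Proposition~\ref{P:2.1} by producing a subsequence $(k_n)$ of $\bn$ along which condition~\eqref{E:2.2} is satisfied. I construct $(k_n)$ by a diagonal induction: at stage $n$, keeping an infinite set $M_n\subset\bn$ of remaining candidates and a prescribed sequence $\vare_n\downarrow 0$, I apply condition~\ref{E:2.3d.alt} with parameters $(\vare_n, M_n)$ to obtain a threshold $\delta_n>0$ and an infinite $M_n'\subset M_n$, and then apply condition~\ref{E:2.3e.alt} with parameters $(\delta_n/2,\vare_n,M_n')$ to pick $k_n\in M_n'$ (with $k_n>k_{n-1}$) and an infinite tail $M_{n+1}\subset M_n'\cap(k_n,\infty)$. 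Using~\eqref{E:2.2d}, I further arrange, by enlarging $k_n$ within $M_n'$ if necessary, that $\dim(X_{\{k_1,\dots,k_{n-1}\}})\cdot\varf_{k_n}(l_{k_n})/l_{k_n}\to 0$ and $\vare_n\varf_{k_n}(l_{k_n})\to 0$; both are possible because $\varf_m(l)/l\to 0$ uniformly in~$m$ and $\vare_n$ is at my disposal.

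After relabeling so that $k_n=n$, I verify~\eqref{E:2.2}. Fix $M,N\in\infin{\bn}$ with $M\setminus N$ infinite and $B\in\cL(X)$ with $\norm{B}\leq 1$. For $m\in M\setminus N$, decompose $B|_{E_m}=B^{(1)}+B^{(2)}$ with $B^{(1)}\colon E_m\to X_{\{1,\dots,m-1\}}$ and $B^{(2)}\colon E_m\to X_{\bn\setminus\{1,\dots,m\}}$, both of norm at most~$2$ by unconditionality; since $m\notin N$, the $m$-th component is killed by $T_N$. The past contribution is controlled by expanding $B^{(1)}x_{m,i}$ in a basis of the finite-dimensional space $X_{\{1,\dots,m-1\}}$ and applying the duality bound $\sum_i\abs{u^*(x_{m,i})}\leq\norm{u^*}\varf_m(l_m)$ coordinatewise, yielding
\[
\frac{1}{l_m}\sum_{i=1}^{l_m}\norm{T_N B^{(1)}x_{m,i}}\ \leq\ C\,\norm{T}\,\varf_m(l_m)\dim(X_{\{1,\dots,m-1\}})/l_m\ \longrightarrow\ 0
\]
as $m\to\infty$ by the subsequence choice, where $C$ depends only on the unconditionality constants.

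For the future contribution, set $S_m=\{(n,j):n\in N,\ n>m,\ \abs{x^*_{n,j}(B^{(2)}x_{m,i})}>\delta_m\text{ for some }i\}$; applying~\ref{E:2.3e.alt} to the norm-one operator $\tfrac12 P^X_{N\cap(m,\infty)}B^{(2)}$ gives $\abs{S_m}<\vare_m l_m$. Let $N_m=\{n:\exists j,\ (n,j)\in S_m\}$, so $\abs{N_m}\leq\abs{S_m}$, and split $B^{(2)}=B^{(2,L)}+B^{(2,H)}$ with $B^{(2,L)}=P^X_{N_m}B^{(2)}$ and $B^{(2,H)}=P^X_{\bn\setminus N_m}B^{(2)}$. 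For the ``high'' piece, the vector $\widetilde x_{m,i}=\tfrac12 P^X_N B^{(2,H)}x_{m,i}\in B_{X_N}$ has all generalized coordinates bounded by $\delta_m$: they vanish for $n\in N_m$ (since $B^{(2,H)}$ does) and are at most $\delta_m/2$ for $n\in N\setminus N_m$ with $n>m$ by the defining property of $S_m$. Hence~\ref{E:2.3d.alt} gives $\norm{T_N B^{(2,H)}x_{m,i}}=2\norm{T_N\widetilde x_{m,i}}\leq 2\vare_m$, so the corresponding average is at most $2\vare_m$.

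The main obstacle is the ``low'' piece $B^{(2,L)}$, whose image lies in the finite-dimensional $X_{N_m}$ but with $\dim(X_{N_m})$ only weakly controlled. I split $T_N B^{(2,L)}x_{m,i}$ in $Y$ along the unconditional basis $(f_{n,j})$ into a ``large-coordinate'' portion indexed by $S_m$ and a complementary ``small-coordinate'' portion indexed by $\{(n,j):n\in N_m,\ (n,j)\notin S_m\}$. The large portion contributes at most $\sum_{(n,j)\in S_m}\sum_i\abs{x^*_{n,j}(B^{(2)}x_{m,i})}\leq 2\norm{T}\abs{S_m}\varf_m(l_m)\leq 2\norm{T}\vare_m l_m\varf_m(l_m)$ to the unnormalised sum, so its average is $O(\vare_m\varf_m(l_m))\to 0$ by construction. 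For the small-coordinate portion I plan to produce a compensating vector $z_{m,i}\in X_{N_m}\subset X_N$ of controlled norm whose $x^*$-coordinates match $B^{(2)}x_{m,i}$ at the slots $(n,j)\notin S_m$ with $n\in N_m$ and vanish elsewhere, by solving within the low-dimensional $X_{N_m}$ the prescribed system of linear functional equations; since $\abs{N_m}<\vare_m l_m$ is small and the $x^*_{n,j}$'s are biorthogonal to the $x_{n,j}$'s (giving enough linear independence after possibly refining the subsequence further), this is feasible, and then~\ref{E:2.3d.alt} applied to $z_{m,i}/\norm{z_{m,i}}$ bounds the remaining contribution by $\vare_m\cdot\mathrm{const}$. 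Combining all four estimates yields $\frac{1}{l_m}\sum_i\norm{T_N Bx_{m,i}}\to 0$ along $m\in M\setminus N$, verifying~\eqref{E:2.2} and concluding via Proposition~\ref{P:2.1}.
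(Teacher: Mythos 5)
Your set-up — building the subsequence by a diagonal induction that applies (a) to get $\delta_m$ and (b) to locate $k_m$, and then splitting $B|_{E_m}=B^{(1)}+B^{(2)}$ with the past and high-future pieces handled as you do — matches the paper's overall strategy and is correct up to that point. The gap is in the ``low'' piece. Your compensating-vector $z_{m,i}$ is supposed to satisfy $x^*_{n,j}(z_{m,i})=x^*_{n,j}(B^{(2)}x_{m,i})$ for $(n,j)\notin S_m$ with $n\in N_m$ and $=0$ otherwise. But the functionals $(x^*_{n,j})_{j=1}^{l_n}$ are \emph{not} biorthogonal to $(x_{n,j})_{j=1}^{l_n}$; condition~\eqref{E:2.2b} only gives $x^*_{n,j}(x_{n,j})\geq 1$. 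Moreover, there may be many more functionals than the dimension: in the main application of Section~\ref{S:4}, $l_n\gg\dim(E_n)$, so prescribing $l_n$ values of $x^*_{n,j}(\cdot)$ on the $\dim(E_n)$-dimensional space $E_n$ is an overdetermined system with, in general, no solution; and even when a solution exists, there is no a priori bound on its norm, which is essential to apply (a). A second, independent gap is your claim that you can ``further arrange $\vare_n\varf_{k_n}(l_{k_n})\to 0$ by enlarging $k_n$.'' That does not work: $\vare_n$ must be fixed before (a) and (b) are invoked to produce $\delta_n$ and $k_n$, and $\varf_m(l_m)$ is unbounded as $m\to\infty$ (only $\varf_m(l_m)/l_m\to 0$), so enlarging $k_n$ can make $\vare_n\varf_{k_n}(l_{k_n})$ larger, not smaller.

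The paper avoids both difficulties by decomposing $\{1,\dots,l_m\}$ rather than decomposing $B^{(2)}$ or the $Y$-coordinates. For each bad pair $(n,j)\in J(B)$, it considers the set $I_{n,j}$ of indices $i$ with $\abs{x^*_{n,j}(Bx_{m,i})}>\delta$ and shows, via the duality estimate
\[
C\sup_\pm\Bignorm{\sum_{i\in I_{n,j}}\pm x_{m,i}}\ \geq\ \sup_\pm\sum_{i\in I_{n,j}}\pm f^*_{n,j}\big(T_{M_r}Bx_{m,i}\big)\ \geq\ \abs{I_{n,j}}\,\delta
\]
together with the earlier choice of $l$ in~\eqref{E:2.2d}, that $\abs{I_{n,j}}<l$ with $l$ a \emph{fixed} integer chosen at this stage of the induction. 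Hence $\abs{I}\leq l\,\abs{J}$, and (b) is invoked with the explicit tolerance $\vare_r l_{k_r}/(2Cl)$ so that the bad sum $\sum_{i\in I}\norm{T_{M_r}Bx_{m,i}}\leq C\abs{I}\leq \vare_r l_{k_r}/2$, while the good indices $i\notin I$ have all coordinates $\leq\delta$ and are handled uniformly by (a). No biorthogonality and no control of $\vare_n\varf_{k_n}(l_{k_n})$ is needed. You should replace the low-piece linear-algebra construction with this $I_{n,j}$ argument.
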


\begin{proof}
  Let $(\vare_r)_{r=1}^\infty\subset (0,1)$ be a sequence which
  decreases to $0$. Put $k_0=0$ and $M_0=\bn$. We will inductively
  choose $k_r\in\bn$ and $M_r\in\infin{\bn}$ so that for all $r\in\bn$
  \begin{align}
    \label{E:2.2.a.alt}
    &\min(M_r)>k_r,\\
    \label{E:2.2.b.alt}
    &k_{r-1}<k_r,\ M_r\subset M_{r-1}\text{ and }k_r\in M_{r-1},\\
    \label{E:2.2.c.alt}
    &\frac1{l_{k_r}} \sum_{i=1}^{l_{k_r}} \norm{B(x_{k_r,i})}\leq\vare_r
    \text{ for all
    }B\in\cL(E_{k_r},X_{\{k_1,k_2,\dots,k_{r-1}\}}),\ \norm{B}\leq
    1\ ,\\
    \label{E:2.2.d.alt}
    &\frac1{l_{k_r}}\sum_{i=1}^{l_{k_r}}\norm{T_{M_r}B(x_{k_r,i})}\leq\vare_r
    \text{ for all }B\in\cL(E_{k_r},X_{M_r}),\ \norm{B}\leq 1\ .
  \end{align}
  Assume that for some $r\in\bn$, we have already chosen suitable
  $k_1<k_2<\dots< k_{r-1}$ and
  $\bn=M_0\supset M_1\supset\dots\supset M_{r-1}$.
  Put $C=\norm{T}$. By using~\ref{E:2.3d.alt}, we choose
  $\delta>0$ and $M\in\infin{M_{r-1}}$ so that
  \begin{equation}
    \label{E:2.2.4.alt}
    \norm{T_M(x)}\leq\frac{\vare_r}{2}\quad\text{ for all
      $x\in B_{X_M}$ with
      $\sup_{m\in M, 1\leq i\leq l_m}\abs{x^*_{m,i}(x)}\leq\delta$,}
  \end{equation}
  Note  that~\eqref{E:2.2.4.alt} still holds if we replace $M$ by any
  infinite subset of $M$.

  We now let $p\in\bn$ be large enough so that there exists a
  sequence $(z^*_j)_{j=1}^p\subset S_{X_{\{k_1,k_2,\dots,k_{r-1}\}}}$
  which normalizes the elements of $X_{\{k_1,k_2,\dots,k_{r-1}\}}$ up
  to the factor $2$, \ie
  \begin{equation}
    \label{E:2.2.1.alt}
    \norm{x}\leq\max_{1\leq j\leq p} 2\abs{z^*_j(x)}\quad
    \text{ for all $x\in X_{\{k_1,k_2,\dots,k_{r-1}\}}$.}
  \end{equation}
  We now apply~\eqref{E:2.2d} and choose $l\in\bn$ and $m_1>k_{r-1}$
  large enough so that for all $m\geq m_1$ we have $l_m\geq l$ and if
  $A\subset\{1,2,\dots,l_m\}$ has $\abs{A}\geq l$, then
  \begin{equation}
    \label{E:2.2.2.alt}
    \sup_\pm \Bignorm{\sum_{i\in A}\pm x_{m,i}}
    <\min\left(\frac{\delta}{C},\frac{\vare_r}{2p}\right)
    \abs{A}\ .
  \end{equation}
  For any $m\geq m_1$ and any
  $B\in\cL(E_m,X_{\{k_1,k_2,\dots,k_{r-1}\}})$ with $\norm{B}\leq 1$
  it follows that
  \begin{align}
    \label{E:2.2.3.alt}
    \frac1{l_m}\sum_{i=1}^{l_m}\bignorm{B(x_{m,i})}
    &\leq\frac2{l_m}\sum_{i=1}^{l_m}\sum_{j=1}^p\bigabs{z^*_jB(x_{m,i})}\\
    &= \frac2{l_m}\sum_{j=1}^p z^*_j\circ B
    \Big(\sum_{i=1}^{l_m}\sigma_{i,j}x_{m,i}\Big)\notag\\
    &\hspace{-2em}\big(\text{with
      $\sigma_{i,j}=\text{sign}\big(z^*_jB(x_{m,i})\big)$ for
      $1\leq i\leq l_m$ and $1\leq j\leq p$}\big)\notag\\
    &\leq\frac{2p}{l_m}\sup_\pm
    \Bignorm{\sum_{i=1}^{l_m}\pm x_{m,i}}\leq\vare_r\ . \notag
  \end{align}
  Thus~\eqref{E:2.2.c.alt} will hold for any $k_r\geq m_1$. We now
  apply assumption  \ref{E:2.3e.alt} and choose $k_r\in M$ and an
  infinite subset $M_r$ of $M$ with $m_1\leq k_r<\min(M_r)$ so that
  for every $B\in\cL(E_{k_r},X_{M_r})$ with $\norm{B}\leq 1$ we have
  that
  \begin{multline}
    \label{E:size}
    \abs{J(B)}<\frac{\vare_rl_{k_r}}{2Cl}\quad\text{where }\\
    J(B)=\left\{(n,j):\,n\in M_r,\ 1\leq j\leq
    l_n,\ \abs{x^*_{n,j}(Bx_{k_r,i})}>\delta\textrm{ for some }1\leq
    i\leq l_{k_r}\right\}.
  \end{multline}
  We now verify~\eqref{E:2.2.d.alt} and complete the inductive
  construction. Let $B\in\cL(E_{k_r},X_{M_r})$ with
  $\norm{B}\leq 1$ and set $J=J(B)$. For each $(n,j)\in J$ we denote
  \[
  I_{n,j}=\big\{
  i\in\{1,2,\dots,l_{k_r}\}:\,\abs{x^*_{n,j}(Bx_{k_r,i})}>\delta\big\}\ .
  \]
  We now have for each $(n,j)\in J$ that
  \begin{align*}
    C\sup_\pm\Bignorm{\sum_{i\in I_{n,j}}\pm x_{k_r,i}} &\geq
    \sup_\pm\Bignorm{\sum_{i\in I_{n,j}}\pm T_{M_r} B x_{k_r,i}}\\
    &\geq\sup_\pm\sum_{i\in I_{n,j}}\pm f^*_{n,j}(T_{M_r}Bx_{k_r,i})\\
    &\geq\abs{I_{n,j}}\delta
  \end{align*}
  where we used the fact that
  $f^*_{n,j}\circ T_{M_r}=x^*_{n,j}$. On the other hand, we have
  by~\eqref{E:2.2.2.alt} that if $\abs{I_{n,j}}\geq l$ then
  \[
  \sup_\pm\Bignorm{\sum_{i\in I_{n,j}}\pm x_{k_r,i}}<
  \delta\abs{I_{n,j}}/C\ .
  \]
  Thus, $\abs{I_{n,j}}<l$ for all $(n,j)\in J$. We now set
  $I=\bigcup_{(n,j)\in J} I_{n,j}$ and calculate
  \begin{align*}
    \sum_{i=1}^{l_{k_r}} \norm{T_{M_r}B(x_{k_r,i})}&\leq
    \sum_{i\in I}\norm{T_{M_r}B(x_{k_r,i})} +
    \sum_{i\notin I}\norm{T_{M_r}B(x_{k_r,i})}\\
    &\leq\sum_{i\in I}\norm{T_{M_r}B(x_{k_r,i})} +
    \vare_rl_{k_r}/2\qquad\text{by~\eqref{E:2.2.4.alt},}\\
    &\leq\sum_{(n,j)\in J}\sum_{i\in I_{n,j}}\norm{T_{M_r}B(x_{k_r,i})}
    +\vare_rl_{k_r}/2\\
    &\leq Cl\abs{J}+\vare_rl_{k_r}/2\qquad\text{as
      $\abs{I_{n,j}}<l$ for all $(n,j)\in J$,}\\
    &\leq\vare_rl_{k_r}\qquad\text{by~\eqref{E:size}.}
  \end{align*}
  Thus we have proven~\eqref{E:2.2.d.alt} and our induction is complete.

  We now prove that condition~\eqref{E:2.2}
  holds. Assumption~\eqref{E:2.2b} and the definition of $T_n$ imply
  that~(\ref{E:2.2}a) holds with $y^*_{n,j}=f^*_{n,j}$. To
  verify~(\ref{E:2.2}b), we consider infinite subsets $M$ and $N$ of
  $\{k_r:\,r\in\bn\}$ with $M\setminus N\in\infin{\bn}$. Let
  $B\in\cL(X)$ and let $m\in M\setminus N$. Define $r$ by $m=k_r$. Let
  $N_{<m}=\{n\in N:\,n<m\}$ and $N_{>m}=\{n\in N:\,n>m\}$. We then
  have the following.
  \begin{align*}
    \frac{1}{l_m}\sum_{i=1}^{l_m} \norm{T_N B(x_{m,i})}
    &\leq\frac{1}{l_m}\sum_{i=1}^{l_m}\norm{T_{N_{<m}} B(x_{m,i})}
    +\frac{1}{l_m}\sum_{i=1}^{l_m}\norm{T_{N_{>m}} B(x_{m,i})}\\
    &\leq\frac{1}{l_{k_r}}\sum_{i=1}^{l_{k_r}}C\norm{P_{\{k_1,\dots,k_{r-1}\}}
      B(x_{k_r,i})}
    +\frac{1}{l_{k_r}}\sum_{i=1}^{l_{k_r}}\norm{T_{M_r} B(x_{k_r,i})}\\
    &\leq
    \vare_rC\norm{B}+\vare_r\norm{B}\qquad\text{by~\eqref{E:2.2.c.alt}
      and~\eqref{E:2.2.d.alt}.} 
  \end{align*}
  Hence we have that
  \[
  \lim_{m\to\infty}\frac{1}{l_m}\sum_{i=1}^{l_m}\norm{T_N B(x_{m,i})}=0
  \]
  and~(\ref{E:2.2}b) is satisfied.
\end{proof}

As mentioned before, the key assumption in Proposition~\ref{P:2.2.alt}
is assumption~(b). We will now present conditions (Lemmas~\ref{L:2.3}
and~\ref{L:RIP} below) which imply this assumption. We will later give
applications in Sections~\ref{S:3} and~\ref{S:4}.

For a Banach space $Z$ with an unconditional basis $(f_j)$, we define
the \emph{lower fundamental function $\lambda_Z\colon\bn\to\br$ of $Z$
}by
\[
\lambda_Z(n)= \inf\Big\{ \Bignorm{\sum_{j\in A}
  f_j}:\,A\subset\bn,\ \abs{A}\geq n\Big\}\qquad (n\in\bn).
\]

\begin{lem}
  \label{L:2.3}
  We are given $\delta,\vare\in(0,1)$, $l\in\bn$ with $\vare l\geq 1$,
  Banach spaces $G$ and $Z$ and a 1-unconditional basis
  $(f_j)_{j=1}^\infty$ for $Z$ with biorthogonal functionals
  $(f^*_j)_{j=1}^\infty$. Assume that for some sequence
  $(x_i)_{i=1}^l\subset S_G$ we have
  \begin{equation}
    \label{eq:lower-fund-fn-estimate}
  \varf(l)/\lambda_Z(\intp{\vare l})<\delta
  \end{equation}
  where
  $\varf(l)=\sup\big\{\norm{\sum_{i\in{}I}\sigma_ix_i}:\,I\subset\{1,2,\dots,l\},\ (\sigma_i)_{i\in{}I}\subset\{\pm{}1\}\big\}$.
  Then for any $B\colon G\to Z$ with $\norm{B}\leq 1$ we have
  \[
  \bigabs{\big\{j\in\bn:\,\abs{f^*_j(Bx_i)}>\delta\text{ for some
    }1\leq i\leq l\big\}}\leq \vare l\ .
  \]
\end{lem}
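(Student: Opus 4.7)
The plan is to argue by contradiction. Assuming the set $J=\{j\in\bn:|f^*_j(Bx_i)|>\delta\text{ for some }1\leq i\leq l\}$ satisfies $|J|>\vare l$, I would have $|J|\geq\intp{\vare l}+1$, so I can choose $J'\subset J$ with $|J'|\geq\intp{\vare l}$ and, for each $j\in J'$, an index $i(j)\in\{1,\dots,l\}$ with $|f^*_j(Bx_{i(j)})|>\delta$. The strategy is to apply $B$ to signed sums $\sum_i\tau_i x_i$ (whose norms are at most $\varf(l)$ by definition) and, via an averaging argument, force the image to have average norm at least $\delta\lambda_Z(\intp{\vare l})$, which contradicts the hypothesis.

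Let $\tau=(\tau_i)_{i=1}^l$ be uniformly distributed on $\{\pm 1\}^l$, set $y_\tau=\sum_i\tau_i x_i$, and write $By_\tau=\sum_j c_j(\tau)f_j$ with $c_j(\tau)=\sum_i\tau_i f^*_j(Bx_i)$. For every realization, $\norm{By_\tau}_Z\leq\norm{B}\norm{y_\tau}\leq\varf(l)$. The crucial pointwise lower bound I need is
\[
\be\,|c_j(\tau)|\geq |f^*_j(Bx_{i(j)})|>\delta\qquad\text{for every }j\in J'.
\]
To establish this, I would condition on the signs $\tau_i$ with $i\neq i(j)$: writing $a=f^*_j(Bx_{i(j)})$ and $c=\sum_{i\neq i(j)}\tau_i f^*_j(Bx_i)$, the conditional expectation over $\tau_{i(j)}\in\{\pm1\}$ equals $\tfrac12(|c+a|+|c-a|)$, which is at least $|a|$ by the triangle inequality $|c+a|+|c-a|\geq|(c+a)-(c-a)|=2|a|$.

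With this pointwise bound in hand, the $1$-unconditionality of $(f_j)$ gives $\norm{By_\tau}_Z=\bignorm{\sum_j|c_j(\tau)|f_j}_Z\geq\bignorm{\sum_{j\in J'}|c_j(\tau)|f_j}_Z$. Applying Jensen's inequality to the convex map $v\mapsto\bignorm{\sum_{j\in J'}v_jf_j}_Z$, then the lattice property of $(f_j)$ together with the definition of $\lambda_Z$, I would obtain
\[
\varf(l)\geq\be\bignorm{\sum_{j\in J'}|c_j(\tau)|\,f_j}_Z\geq\bignorm{\sum_{j\in J'}\be|c_j(\tau)|\,f_j}_Z\geq\delta\bignorm{\sum_{j\in J'}f_j}_Z\geq\delta\,\lambda_Z(\intp{\vare l}),
\]
which contradicts $\varf(l)/\lambda_Z(\intp{\vare l})<\delta$. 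The main obstacle is identifying the constant-free pointwise lower bound on $\be|c_j(\tau)|$; standard Khintchine-type estimates cost a factor $1/\sqrt{2}$ that the strict inequality in the hypothesis cannot absorb, so one really needs the elementary sign-averaging argument above to extract the value $|f^*_j(Bx_{i(j)})|$ itself.
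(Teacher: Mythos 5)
Your proposal is correct and follows essentially the same route as the paper's own proof: Rademacher sign averaging, the constant-free pointwise lower bound $\be|c_j(\tau)|\geq|f^*_j(Bx_{i(j)})|>\delta$ obtained by an elementary symmetrization (your conditioning on $\tau_i$, $i\neq i(j)$, is the same computation as the paper's multiplication by $r_{i_0}$), Jensen's inequality, and the $1$-unconditionality of $(f_j)$ combined with the definition of $\lambda_Z$. The only cosmetic differences are that you argue by contradiction with a finite subset $J'\subset J$ while the paper directly bounds $\lambda_Z(|J|)$ and invokes monotonicity, and that you sum over all $i\in\{1,\dots,l\}$ rather than restricting to the indices $i$ that actually witness a large coordinate.
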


\begin{proof}
  Fix an operator $B\colon G\to Z$ with $\norm{B}\leq 1$ and set
  \begin{align*}
    I&=\big\{i\in\{1,2,\dots,l\}:\,\abs{f^*_j(Bx_i)}>\delta\text{ for
      some }j\in\bn\big\}\ ,\\
    J&=\big\{j\in\bn:\,\abs{f^*_j(Bx_i)}>\delta\text{ for some }1\leq
    i\leq l\big\}\ .
  \end{align*}
  We next fix independent Rademacher random variables
  $(r_i)_{i\in I}$ and establish the estimate
  \begin{equation}
    \label{eq:lem:ave-of-coords}
    \be \biggabs{\sum_{i\in I} r_i f^*_j\big(B(x_i)\big)}
    >\delta \qquad \text{for all }j\in J\ .
  \end{equation}
  To see this, fix $j\in J$ and set $y_i=f^*_j\big(B(x_i)\big)$ for 
  $i\in I$. By the definition of $J$, there is an $i_0\in I$ such
  that $\abs{y_{i_0}}>\delta$. Thus, by Jensen's inequality we have
  \begin{eqnarray*}
    \be \biggabs{\sum_{i\in I} r_i y_i} &=& \be \biggabs{\sum_{i\in I}
      r_{i_0} r_i y_i} = \be \biggabs{y_{i_0} + \sum _{i\in I,\ i\neq
        i_0} r_{i_0}r_i y_i} \\[2ex]
    & \geq & \biggabs{y_{i_0}  + \sum _{i\in I,\ i\neq i_0} \be
      (r_{i_0}r_i) y_i} = \abs{y_{i_0}}>\delta\ .
  \end{eqnarray*}
  We then calculate
  \begin{eqnarray*}
    \varf (l) &\geq & \be \biggnorm{\sum_{i\in I} r_i
      B(x_i) }_Z \qquad \text{as $\norm{B}\leq
      1$,}\\[2ex]
    & = & \be \biggnorm{\sum_j \biggabs{\sum_{i\in I} r_i
        f^*_j\big(B(x_i)\big)} f_j}_Z \quad\text{as $(f_j)$ is
      1-unconditional,}\\[2ex]
    &\geq & \biggnorm{\sum_j \be \biggabs{\sum_{i\in I} r_i
        f^*_j\big(B(x_i)\big)} f_j}_Z \qquad \text{by
      Jensen's inequality,}\\[2ex]
    &\geq & \delta \biggnorm{\sum_{j\in J}f_j}_Z
    \quad\text{using \eqref{eq:lem:ave-of-coords} and the
      1-unconditionality of $(f_j)$ ,}\\[2ex]
    &\geq & \delta \lambda_Z(\abs{J})\ .
  \end{eqnarray*}
  Since the lower fundamental function $\lambda_Z$ is clearly
  increasing, it follows from
  assumption~\eqref{eq:lower-fund-fn-estimate} that
  $\abs{J}\leq\vare l$.
\end{proof}

We now state and prove a very different condition that also implies
assumption~(b) in Proposition~\ref{P:2.2.alt}. Here we use the
notation and framework established on
page~\pageref{page:unc-basis-set-up}.

\begin{lem}
  \label{L:RIP}
  Let $1\leq s,t<\infty$ and suppose the following holds.
  \begin{mylista}{(m)}
  \item
    There is a constant $c_1>0$ so that
    $(e^*_{m,i})_{i=1}^{\dim(E_m)}$ is $c_1$-dominated by the unit
    vector basis of $\ell_s$ for each $m\in\bn$. That is,
    \[
    \snorm{\sum_{i=1}^{\dim(E_m)}a_i e_{m,i}^*}\leq
    c_1\left(\sum_{i=1}^{\dim(E_m)}\abs{a_i}^s\right)^{1/s}\qquad\text{for
      all scalars } (a_i)_{i=1}^{\dim(E_m)}\ ,
    \]
  \item
    There is a constant $c_2>0$ so that for all $m,n\in\bn$ with $m<n$
    and all $A\subset\{1,2,\dots,l_n\}$ with $\abs{A}\leq l_m$, the
    sequence $(x^*_{n,j})_{j\in A}$ is $c_2$-weak $\ell_s$. That is, 
    \[
    \bigg(\sum_{j\in A}\abs{x^*_{n,j}(x)}^s\bigg)^{1/s}\leq
    c_2\norm{x}\qquad\text{for all }x\in E_n\ .
    \]
  \item
    There is a constant $c_3>0$ so that if $z_n\in S_{E_n}$ for all
    $n\in\bn$ then $(z_n)_{n=1}^\infty$ $c_3$-dominates the unit
    vector basis for $\ell_t$.  In other words,
    \[
    \bigg(\sum_{n\in\bn} \norm{P^X_n x}^t\bigg)^{1/t}\leq c_3
    \norm{x}\qquad\text{for all }x\in X\ .
    \]
  \item
    $\lim_{m\to\infty} \big(\dim(E_m)\big)^{max(1,t/s)}l_m^{-1}=0$.
  \end{mylista}
  Then for all $\delta,\vare>0$ there exists $m\in\bn$ so that for all
  $N\in\infin{\{n\in\bn:\,n\geq m+1\}}$ and for all $B\in\cL(E_m,X_N)$
  with $\norm{B}\leq 1$, the set
  \[
  J=\left\{(n,j):\,n\in N,\ 1\leq j\leq
  l_n,\ \abs{x^*_{n,j}(Bx_{m,i})}>\delta\textrm{ for some }1\leq i\leq
  l_m\right\}
  \]
  has $\abs{J}\leq\vare l_m$.
\end{lem}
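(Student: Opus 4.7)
My plan is to enlarge $J$ to a set depending only on the scalars $\beta_{n,j,k}:=x^*_{n,j}(Be_{m,k})$ (not on the particular test vectors $x_{m,i}$) via Hölder using (a), bound the enlarged set by a Markov-style count based on (b) and (c), and close the argument with (d). The roles are clean: (a) powers Hölder, (b) converts the Markov count into norms of $P^X_nBe_{m,k}$, (c) sums those norms in $n$, and (d) turns the final bound of order $\dim(E_m)^{\max(1,t/s)}$ into one of order $\vare l_m$.

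For the Hölder step, I expand $x_{m,i}=\sum_k a_{k,i}e_{m,k}$ with $a_{k,i}=e^*_{m,k}(x_{m,i})$. Dualizing (a) in the finite-dimensional space $E_m$ gives $\bigl(\sum_k|a_{k,i}|^{s'}\bigr)^{1/s'}\leq c_1$, where $1/s+1/s'=1$. Since $x^*_{n,j}(Bx_{m,i})=\sum_k a_{k,i}\beta_{n,j,k}$, Hölder yields $|x^*_{n,j}(Bx_{m,i})|\leq c_1\bigl(\sum_k|\beta_{n,j,k}|^s\bigr)^{1/s}$, and hence $J\subset\widetilde{J}:=\bigl\{(n,j):\sum_k|\beta_{n,j,k}|^s>(\delta/c_1)^s\bigr\}$.

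For the Markov count, $(\delta/c_1)^s|\widetilde{J}|\leq\sum_k\sum_n\sum_{j\in\widetilde{J}_n}|\beta_{n,j,k}|^s$, where $\widetilde{J}_n=\{j:(n,j)\in\widetilde{J}\}$. In the case $s\geq t$, assuming $|\widetilde{J}_n|\leq l_m$ for each $n$, (b) bounds the inner sum by $c_2^s\norm{P^X_nBe_{m,k}}^s$; since $\norm{P^X_nBe_{m,k}}\leq 1$ and $s\geq t$, (c) gives $\sum_n\norm{P^X_nBe_{m,k}}^s\leq\sum_n\norm{P^X_nBe_{m,k}}^t\leq c_3^t$, and summing over $k$ produces $|\widetilde{J}|\leq(c_1c_2/\delta)^s c_3^t\dim(E_m)$. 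In the case $s<t$, the finite-dimensional embedding $\norm{\cdot}_{\ell_s^{\dim(E_m)}}\leq\dim(E_m)^{1/s-1/t}\norm{\cdot}_{\ell_t^{\dim(E_m)}}$ enlarges $\widetilde{J}$ to $\widetilde{J}':=\bigl\{(n,j):\sum_k|\beta_{n,j,k}|^t>(\delta/c_1)^t\dim(E_m)^{1-t/s}\bigr\}$; the identical Markov scheme with $t$ in place of $s$ and using the consequence $\sum_{j\in A}|x^*_{n,j}(z)|^t\leq c_2^t\norm{z}^t$ for $|A|\leq l_m$ (immediate from (b) combined with the pointwise $|x^*_{n,j}(z)|\leq c_2\norm{z}$) together with (c) directly gives $|\widetilde{J}'|\leq(c_1c_2c_3/\delta)^t\dim(E_m)^{t/s}$.

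The only loose end is the hypothesis $|\widetilde{J}_n|\leq l_m$ (respectively $|\widetilde{J}'_n|\leq l_m$). I will handle it by the same Markov step applied to an $l_m$-subset: a "bad" $n$ violating the bound forces $\sum_k\norm{P^X_nBe_{m,k}}^p$ (with $p=s$ or $p=t$) to exceed a fixed positive amount, so the total $\sum_n\sum_k\norm{P^X_nBe_{m,k}}^p\leq c_3^p\dim(E_m)$ caps the number of bad $n$ by a constant times $\dim(E_m)^{\max(1,t/s)}/l_m$ uniformly over $B$ and $N$. Hypothesis (d) forces this to be strictly less than $1$ for $m$ large, so no bad $n$ exists for any admissible $B$ or $N$, and then $|J|\leq|\widetilde{J}|<\vare l_m$ follows by (d) again. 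The main obstacle, I expect, will be organizing the two cases cleanly — in particular the rescaling by $\dim(E_m)^{1-t/s}$ in the case $s<t$, which is precisely what forces the factor $\max(1,t/s)$ to appear in~(d).
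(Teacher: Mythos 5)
Your proof is correct and is driven by the same core mechanism as the paper's: (a) to move from the test vectors $x_{m,i}$ to the coordinates $\beta_{n,j,k}=x^*_{n,j}(Be_{m,k})=x^*_{n,j}(P^X_nBe_{m,k})$, a Markov count using (b), summation over $n$ via (c), and the conclusion via (d). Where you diverge is in how you enable the application of (b), which requires the index set over each $n$ to have at most $l_m$ elements. The paper simply truncates: it works with an arbitrary $H\subset J$ of cardinality $\leq l_m$ and shows $\abs{H}<\vare l_m$, from which $\abs{J}<\vare l_m$ follows automatically; this makes the cardinality constraint in (b) available from the outset at zero extra cost. You instead pass to the enlarged set $\widetilde{J}$ (which has the advantage of not depending on the $x_{m,i}$) and then run a separate preliminary ``no bad $n$'' argument to show that for $m$ large each $\widetilde{J}_n$ has at most $l_m$ elements; that argument is itself another instance of the same Markov-plus-(c)-plus-(d) computation and works uniformly in $B$ and $N$, but the truncation device is shorter and avoids it entirely. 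The other small difference is in the case $s<t$: the paper keeps the Markov exponent at $s$, bounds $\sum_i\norm{P^X_nBe_{m,i}}^s$ by H\"older against $\ell_t$, and raises the resulting inequality to the power $t/s$; you instead convert the $\ell_s$-threshold defining $\widetilde{J}$ into an $\ell_t$-threshold (picking up the factor $\dim(E_m)^{1-t/s}$) and run Markov directly at exponent $t$, using the easy consequence $\sum_{j\in A}\abs{x^*_{n,j}(z)}^t\leq c_2^t\norm{z}^t$ of (b). Both routes yield the same final bound of order $\dim(E_m)^{t/s}$, and the relevant constants are uniform in $B$ and $N$ as required, so the argument is sound.
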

\begin{proof}
  Let $0<\delta,\vare<1$, 
  $m\in\bn$, $N\in\infin{\{n\in\bn:\,n\geq m+1\}}$ and
  $B\in\cL(E_m,X_N)$ with $\norm{B}\leq 1$. Let $H\subset J$ be such
  that $\abs{H}\leq l_m$. Note that if we prove that
  $\abs{H}<\vare l_m$ then we have that $\abs{J}<\vare l_m$. For each
  $n\in N$ denote $H_n=\big\{j\in\{1,2,\dots,l_n\}:\,(n,j)\in H\}$. We
  have that
  \begin{align*}
    \delta^s\abs{H_n}&\leq \sum_{j\in H_n} \norm{B^* x_{n,j}^*}^s\\
    &=\sum_{j\in H_n}\Bignorm{\sum_{i=1}^{\dim(E_m)}
      \big(B^*x^*_{n,j}(e_{m,i})\big)e^*_{m,i}}^s\\
    &\leq c_1^s \sum_{j\in H_n} \sum_{i=1}^{\dim(E_m)}
    \abs{B^*x^*_{n,j}(e_{m,i})}^s\qquad\text{by (a),}\\
    &= c_1^s  \sum_{i=1}^{\dim(E_m)}\sum_{j\in H_n} \abs{x^*_{n,j}(P^X_n
      B e_{m,i})}^s\\
    &\leq  c_1^s c_2^s \sum_{i=1}^{\dim(E_m)} \norm{P^X_n B
      e_{m,i}}^s\qquad\text{by~(b).}
  \end{align*}
  For the case that $t\leq s$, we may use the fact that
  $\norm{P^X_nBe_{m,i}}\leq 1$ to obtain
  \begin{equation}
    \label{E:ts}
    \delta^{s}\abs{H_n}\leq c_1^s c_2^s \sum_{i=1}^{\dim(E_m)}
    \norm{P^X_nBe_{m,i}}^s\leq c_1^s c_2^s \sum_{i=1}^{\dim(E_m)}
    \norm{P^X_nBe_{m,i}}^t\ .
  \end{equation}  
  For the case that $s<t$, H\"olders inequality gives that
  \[
  \delta^{s}\abs{H_n}\leq  c_1^s c_2^s \sum_{i=1}^{\dim(E_m)}
  \norm{P^X_nBe_{m,i}}^s\leq c_1^s c_2^s
  \big(\dim(E_m)\big)^{\frac{t-s}t}\bigg(\sum_{i=1}^{\dim(E_m)}
  \norm{P_nBe_{m,i}}^t\bigg)^{s/t}\ .
  \]
  By raising the above inequality to the power $t/s$, we have for
  $s<t$ that
  \begin{equation}
    \label{E:st}
    \delta^{t}\abs{H_n}\leq\delta^{t}\abs{H_n}^{t/s} \leq c_1^tc_2^t
    \big(\dim(E_m)\big)^{\frac{t-s}s} \sum_{i=1}^{\dim(E_m)}
    \norm{P^X_nBe_{m,i}}^t\ .
  \end{equation}
  We now finish the proof for the case that $t\leq s$, and we will
  consider the remaining case later. Summing~\eqref{E:ts} over
  $n\in N$ gives that
  \begin{align*}
    \abs{H}&=\sum_{n\in N}\abs{H_n}\\
    &\leq \delta^{-s}c_1^s c_2^s \sum_{i=1}^{\dim(E_m)}\sum_{n\in N}
    \norm{P^X_nBe_{m,i}}_{E_n}^t\\
    &\leq \delta^{-s}c_1^s c_2^s \sum_{i=1}^{\dim(E_m)}c_3^t
    \norm{Be_{m,i}}^t\qquad\text{by (c),}\\
    &\leq \delta^{-s}c_1^s c_2^s c_3^t \dim(E_m)\ .
  \end{align*}
  As $t\leq s$ we have by~(d) that
  $\lim_{m\to\infty}\dim(E_m)l_m^{-1}=0$. Hence, if $m\in\bn$ is large
  enough then $\abs{H}<\vare l_m$, and thus $\abs{J}<\vare l_m$ as
  well.
    
  We now consider the remaining case that $s<t$. By~\eqref{E:st} we
  have that
  \begin{align*}
    \abs{H}&=\sum_{n\in N}\abs{H_n}\\
    &\leq\delta^{-t}c_1^{t} c_2^{t} \big(\dim(E_m)\big)^{\frac{t-s}s}
    \sum_{i=1}^{\dim(E_m)} \sum_{n\in N} \norm{P^X_nBe_{m,i}}_{E_n}^t\\
    &= \delta^{-t}c_1^t c_2^t c_3^t \big(\dim(E_m)\big)^{\frac{t-s}s}
    \sum_{i=1}^{\dim(E_m)} \norm{Be_{m,i}}^t\qquad\text{by (c),}\\
    &\leq \delta^{-t}c_1^{t} c_2^{t} c_3^t
    \big(\dim(E_m)\big)^{\frac{t-s}s} \dim(E_m)\\
    &= \delta^{-t}c_1^{t} c_2^{t} c_3^t
    \big(\dim(E_m)\big)^{\frac{t}s}\ .
  \end{align*}
  As $s<t$ we have by~(d) that
  $\lim_{m\to\infty}\big(\dim(E_m)\big)^{\frac{t}s}l_m^{-1}=0$. Hence,
  if $m\in\bn$ is large enough then $\abs{H}<\vare l_m$, and thus
  $\abs{J}<\vare l_m$.
\end{proof}

\section{Applications I}
\label{S:3}
 
In this section we apply the general process developed in
Section~\ref{sec:general-condition} together with Lemma~\ref{L:2.3} to
establish a class of pairs $(X,Y)$ of Banach spaces for which
$\cL(X,Y)$ contains $2^\continuum$ distinct closed ideals. We will
then give a list of examples including classical $\ell_p$-spaces and
$p$-convexified Tsirelson spaces.

\begin{thm}
  \label{thm:ufdds-with-lower-upper-estimates}
  Let $1<p\leq r<2$ and $1<r<q<\infty$. Let $X$ be an unconditional
  sum of a sequence $(E_n)$ of finite-dimensional Banach spaces
  satisfying a lower $\ell_r$-estimate, and assume that the $E_n$
  contain uniformly complemented, uniformly isomorphic copies of
  $\ell_p^m$. Let $Y$ be an unconditional sum of a sequence $(F_n)$ of
  finite-dimensional Banach spaces satisfying an upper
  $\ell_q$-estimate. Then $\cL(X,Y)$ contains $2^\continuum$ distinct
  closed ideals.
\end{thm}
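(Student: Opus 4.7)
The plan is to apply Proposition~\ref{P:2.2.alt}, verifying the decisive condition~\ref{E:2.3e.alt} via Lemma~\ref{L:2.3}. The first ingredient is the choice of an auxiliary exponent $s$ with $r<s\leq\min(q,2)$, for example $s=\min(q,2)$, so that $p\leq r<s\leq 2$ and $s\leq q$. Using a Dvoretzky-type theorem (when $s=2$) or a Lewis-type isomorphic embedding $\ell_s^{l_n}\hookrightarrow\ell_p^{m_n}$ (when $s<2$), we select, after passing to a subsequence of $(E_n)$, normalized vectors $(x_{n,j})_{j=1}^{l_n}\subset E_n$ that are $C$-equivalent to the unit vector basis of $\ell_s^{l_n}$ and span a uniformly $K$-complemented subspace of $E_n$, with $l_n\to\infty$. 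Let $(x^*_{n,j})\subset E_n^*$ be biorthogonal functionals of uniformly bounded norm, extended by zero off the $\ell_s$-copy; pass to a blocking of $(F_n)$ so that $\dim F_n=l_n$; and set $T_n(x)=\sum_{j=1}^{l_n}x^*_{n,j}(x)f_{n,j}$.

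Boundedness of $T\colon X\to Y$ rests on three ingredients: (i) the $\ell_s$-domination of the $(x^*_{n,j})_j$ together with the upper $\ell_q$-structure in $F_n$ (using $s\leq q$) gives $\|T_n\|\leq C'$ uniformly; (ii) the upper $\ell_q$-estimate on $Y$ across blocks bounds $\|Tx\|_Y$ by $C_qC'(\sum_n\|x_n\|^q)^{1/q}$; (iii) the inclusion $\ell_r\hookrightarrow\ell_q$ (from $r<q$) combined with the lower $\ell_r$-estimate on $X$ finishes. Conditions~\eqref{E:2.2a} and~\eqref{E:2.2b} are immediate from the construction, and for~\eqref{E:2.2d} the $\ell_s$-equivalence yields $\varf_m(l)/l\leq Cl^{1/s-1}\to 0$ since $s>1$. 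Condition~\ref{E:2.3d.alt} is verified by interpolation: if $\sup_{n,j}|x^*_{n,j}(x)|\leq\delta$ and $\|x\|_{X_N}\leq 1$, then on each block
\[
\Bigl(\sum_j|x^*_{n,j}(x_n)|^q\Bigr)^{1/q}\leq\delta^{(q-s)/q}\Bigl(\sum_j|x^*_{n,j}(x_n)|^s\Bigr)^{1/q},
\]
and summation together with upper $\ell_q$ on $Y$ and the lower $\ell_r$-estimate on $X$ (using $r\leq s$ to pass from $\ell_r$- to $\ell_s$-sums of block norms) produces $\|T_Nx\|_Y\lesssim\delta^{(q-s)/q}$, which is $<\vare$ for $\delta$ small.

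The decisive step is condition~\ref{E:2.3e.alt}. The plan is to apply Lemma~\ref{L:2.3} with $G=E_m$, $Z=\cspn\{x_{n,j}:n\in N,\,1\leq j\leq l_n\}$ (a complemented subspace of $X_N$ carrying a $1$-unconditional basis after renorming), and $(x_i)=(x_{m,i})_{i=1}^{l_m}$. Any $B\in\cL(E_m,X_N)$ with $\|B\|\leq 1$ is precomposed with the projection $X_N\to Z$ to produce $B'\colon E_m\to Z$ of bounded norm, and the chosen extensions of biorthogonals make the $f^*_j$ of Lemma~\ref{L:2.3} coincide with $x^*_{n,j}$. The two sides of the hypothesis of Lemma~\ref{L:2.3} are $\varf(l_m)\leq Cl_m^{1/s}$ (from $\ell_s$-equivalence of $(x_{m,i})$) and $\lambda_Z(k)\gtrsim k^{1/r}$; the latter lower bound comes from splitting any basis subset $A=\bigsqcup_{n\in N}A_n$ of $Z$, applying the lower $\ell_r$-estimate in $X$ across blocks and $\ell_s$-equivalence within each block to get $\|\sum_{A}x_{n,j}\|_X\gtrsim(\sum_n|A_n|^{r/s})^{1/r}$, and minimizing this under $\sum|A_n|=k$ and $|A_n|\leq l_n$ together with the freedom, inherited from the blocking of $(F_n)$ and the choice of $N$, to force the infimum to spread across many blocks rather than concentrate in one. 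The ratio is then $\lesssim\vare^{-1/r}\,l_m^{1/s-1/r}\to 0$ as $l_m\to\infty$ since $s>r$, so the hypothesis holds for $m$ large, completing the verification.

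The hard part will be controlling $\lambda_Z$: the unconstrained infimum in the definition of $\lambda_Z$ is achieved by concentrating all of $A$ in one block, giving only $\lambda_Z(k)\sim k^{1/s}$ and a constant (non-shrinking) ratio $\vare^{-1/s}$. Obtaining $\lambda_Z(k)\gtrsim k^{1/r}$ in the range $k=\vare l_m$ therefore requires a careful design: either truncating the basis of $Z$ to a bounded depth per block (and then recombining contributions over finitely many depth levels with a summable choice of $\vare_j$'s), or choosing the blocking of $(F_n)$ so that, for each $l_m$, the set $N$ can be chosen inside $M$ with $\max_{n\in N}l_n$ much smaller than $\vare l_m$ while still being infinite. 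A secondary point is the extraction step itself: producing $\ell_s^{l_n}$-copies inside the hypothesized $\ell_p^{m_n}$-copies in $E_n$ with both uniform $K$-complementation and uniformly bounded biorthogonals, which is Dvoretzky in $\ell_p^{m_n}$ when $s=2$ and a Lewis-type embedding when $s<2$.
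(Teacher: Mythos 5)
You have correctly located the crux: in the range $k=\lfloor\vare l_m\rfloor$ the lower fundamental function $\lambda_Z(k)$ must strictly outgrow $\varf_m(l_m)$, and with plain $\ell_s$-blocks this fails because both quantities scale like $l_m^{1/s}$. Your own remark at the end is exactly right --- the unconstrained infimum in $\lambda_Z$ is achieved by concentrating all $k$ indices in a single block $n$ with $l_n\geq k$, giving $\lambda_Z(k)\sim k^{1/s}$, and neither of your patches repairs this. Patch (b) asks for an \emph{infinite} $N\subset M$ with $\max_{n\in N}l_n\ll\vare l_m$; but condition~\eqref{E:2.2d} and the induction inside Proposition~\ref{P:2.2.alt} force $l_n\to\infty$ along the whole subsequence eventually used, so such an $N$ does not exist. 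Truncating the basis of $Z$ to bounded depth per block changes the pair $(T_N,X_N)$ and hence the ideal under consideration. So the construction with $\ell_s$-blocks leaves condition~\ref{E:2.3e.alt} unverified, and there is no obvious way to finish.

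The paper's resolution is to replace the $\ell_s^{l_n}$-copies inside $E_n$ by the finite-dimensional Rosenthal spaces $F^{(n)}_{p,w_n}$ with a varying parameter $w_n=n^{-\eta}$, $0<\eta<\tfrac1r-\tfrac12$, and to take $F_n=\ell_2^n$ (so $l_n=n$ and $T_n$ is the formal identity). By Proposition~\ref{prop:properties-of-Fn}(iii) the within-block fundamental function of $F^{(n)}_{p,w}$ behaves like $k^{1/p}\wedge k^{1/2}/w$; this ``knee'' is the essential feature absent from any fixed $\ell_s$-block design. With $w_m=m^{-\eta}$ one gets $\varf_m(m)\le m^{1/p}\wedge m^{1/2}/w_m=m^{\eta+1/2}$ (since $\eta+\tfrac12<\tfrac1p$), while Lemma~\ref{L:2.2} gives $\lambda_{X_N}(\tilde m)\gtrsim \tilde m^{1/r}\wedge \tilde m^{1/2}/w_{\min N}$; choosing $\min(N)$ so large that $w_{\min N}\le \tilde m^{1/2-1/r}$ makes the $\tilde m^{1/r}$-term win, yielding $\lambda_{X_N}(\lfloor\vare m\rfloor)\gtrsim(\vare m)^{1/r}$. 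The ratio then decays as $m^{\eta+1/2-1/r}\to 0$, which is precisely what your exponents $1/s$ on both sides could not produce. In other words, the tunable parameter $w_n\to 0$ decouples the two scales: it suppresses the within-block growth of $\varf_m$ at $l=l_m$ while simultaneously keeping the across-block lower $\ell_r$-estimate dominant in $\lambda_{X_N}$ in the relevant range. This asymmetry, not an $\ell_s$-versus-$\ell_r$ gap between blocks, is what makes Lemma~\ref{L:2.3} applicable, and is why the Rosenthal $X_{p,w}$ family (as already in~\cite{sz:18}) is indispensable here rather than a convenience.
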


Let us first recall some of the terminology used here. To say that $X$
\emph{is an unconditional sum of a sequence $(E_n)$ }of
finite-dimensional Banach spaces means that $X$ consists of all
sequences $(x_n)$ with $x_n\in E_n$ for all $n\in\bn$, and there is an
unconditional basis $(u_n)$ of some Banach space such that the norm of
an element $(x_n)$ of $X$ is given by
\[
\bignorm{(x_n)}=\Bignorm{\sum_n \norm{x_n}u_n}\ .
\]
If the $(u_n)$ is a $C$-unconditional basis, then we say that $X$ is a
\emph{$C$-unconditional sum of $(E_n)$}. In this case $(E_n)$ is a
UFDD for $X$, but the converse is not true in general.

We say that $X$ \emph{satisfies a lower $\ell_r$-estimate }if $(u_n)$
dominates the unit vector basis of $\ell_r$, \ie if for some $c>0$ and
for all $(x_n)\in X$, the estimate
\[
\Bignorm{\sum_n x_n} \geq c\Big(\sum_n\norm{x_n}^r\Big)^{1/r}
\]
holds. In this case we say that $X$ \emph{satisfies a lower
  $\ell_r$-estimate with constant $c$}. An upper $\ell_r$-estimate
is defined analogously in the obvious way. To say that the $E_n$
contain uniformly complemented, uniformly isomorphic copies of
$\ell_p^m$ means that for some $C>0$ and for all $m\in\bn$ there
exist $n\in\bn$ and a projection $P_n\colon E_n\to E_n$ with
$\norm{P_n}\leq C$ whose image is $C$-isomorphic to $\ell_p^m$.

The special case of $X=\ell_p$ and $Y=\ell_q$ was treated
in~\cite{sz:18} where the existence of a continuum of distinct closed
ideals was established. Here we shall make use of finite-dimensional
versions of Rosenthal's $X_{p,w}$ spaces which were also the main
ingredient in~\cite{sz:18}. We begin by recalling the definition and
relevant properties.

Given $2<p<\infty$, $0<w\leq 1$ and $n\in\bn$, we denote by
$E^{(n)}_{p,w}$ the Banach space $\big(\br^n,\norm{\cdot}_{p,w}\big)$,
where 
\[
\bignorm{(a_j)_{j=1}^n}_{p,w}=\bigg( \sum_{j=1}^n
\abs{a_j}^{p}\bigg)^{\frac{1}{p}} \join w\bigg( \sum_{j=1}^n
\abs{a_j}^2\bigg)^{\frac12}\ .
\]
We write $\big\{ e^{(n)}_j:\,1\leq j\leq n\big\}$ for the unit
vector basis of $E^{(n)}_{p,w}$, and we denote by
$\big\{e^{(n)*}_j:\,1\leq j\leq n\big\}$ the unit vector basis of the
dual space $\big(E^{(n)}_{p,w}\big)^*$, which is biorthogonal to the
unit vector basis of $E^{(n)}_{p,w}$.

Given $1<p<2$, $0<w\leq 1$ and $n\in\bn$,  we fix once and for all a 
sequence $f^{(n)}_j=f^{(n)}_{p,w,j}$, $1\leq j\leq n$, of independent
symmetric, 3-valued random variables with $\norm{f^{(n)}_j}_{L_p}=1$
and $\norm{f^{(n)}_j}_{L_2}=\frac1{w}$ for $1\leq j\leq n$ (these two
equalities determine the distribution of a 3-valued symmetric random
variable). We then define $F^{(n)}_{p,w}$ to be the subspace
$\spn\big\{f^{(n)}_j:\,1\leq j\leq n\big\}$ of $L_p$. It follows from
the work of Rosenthal~\cite{rosenthal:70a} that there exists a
constant $K_p>0$ dependent only on $p$ so that for all scalars
$(a_j)_{j=1}^n$ we have
\begin{equation}
 \label{eq:xp-isom-p<2-n}
 \frac1{K_p}\biggnorm{\sum_{j=1}^n a_je^{(n)*}_j} \leq
 \biggnorm{\sum_{j=1}^n a_j f^{(n)}_j}_{L_p} \leq
 \biggnorm{\sum_{j=1}^n a_je^{(n)*}_j}\ ,
\end{equation}
where $\big\{ e^{(n)*}_j:\,1\leq j\leq n\big\}$ is the unit vector basis
of the dual space $\big(E^{(n)}_{p',w}\big)^*$ as defined above and
$p'$ is the conjugate index of $p$. Since the random variables
$f^{(n)}_j$ are 3-valued, $F^{(n)}_{p,w}$ is a subspace of the span of
indicator functions of $3^n$ pairwise disjoint sets. Thus, we can and
will think of $F^{(n)}_{p,w}$ as a subspace of $\ell_p^{3^n}$. The
following result follows directly from Rosenthal's
work~\cite{rosenthal:70a}.

\begin{prop}\cite{sz:18}*{Proposition 1}
 \label{prop:properties-of-Fn}
 Let $1<p<2$, $0<w\leq 1$ and $n\in\bn$. Then
 \begin{mylist}{(iii)}
 \item
   $\big\{f^{(n)}_{j}:\,1\leq j\leq n\big\}$ is a normalized,
   1-unconditional basis of $F^{(n)}_{p,w}$.
 \item
   There exists a projection
   $P^{(n)}_{p,w}\colon\ell_p^{3^n}\to\ell_p^{3^n}$ onto
   $F^{(n)}_{p,w}$ with $\bignorm{P^{(n)}_{p,w}}\leq K_p$.
 \item
   For each $1\leq k\leq n$ and for every $A\subset \{1,\dots,n\}$
   with $\abs{A}=k$ we have
   \[
   \frac1{K_p}\cdot \Big(k^{\frac{1}{p}} \meet \tfrac{1}{w}
   k^{\frac12}\Big)
   \leq\biggnorm{\sum_{j\in A} f^{(n)}_j} \leq k^{\frac{1}{p}} \meet
   \tfrac{1}{w} k^{\frac12}\ .
   \]
 \end{mylist}
\end{prop}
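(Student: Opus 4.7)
The plan is to derive the three claims from Rosenthal's classical work on sums of independent random variables in $L_p$, combined with an explicit dual-norm computation.

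Part~(i) follows directly from the fact that the $f^{(n)}_j$ are independent, symmetric, and normalised in $L_p$. For any choice of signs $(\sigma_j)\in\{\pm 1\}^n$ the vectors $(\sigma_jf^{(n)}_j)_{j=1}^n$ have the same joint distribution as $(f^{(n)}_j)_{j=1}^n$, so the $L_p$-norm of $\sum_ja_jf^{(n)}_j$ depends only on $(\abs{a_j})_{j=1}^n$; hence the system is $1$-unconditional. Normalisation is built into the definition, and linear independence follows from the non-triviality of each $f^{(n)}_j$ together with independence.

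For~(ii), I would invoke Rosenthal's projection theorem from~\cite{rosenthal:70a} (the case $1<p<2$ being obtained from the case $p>2$ by duality): for any sequence of independent, mean-zero random variables in $L_p$ there is a bounded linear projection from $L_p$ onto their closed linear span whose norm is controlled by a constant $K_p$ depending only on $p$. Since each $f^{(n)}_j$ takes only three values, the variables $f^{(n)}_1,\dots,f^{(n)}_n$ generate a $\sigma$-algebra with at most $3^n$ atoms, and $F^{(n)}_{p,w}$ lies inside the span of the indicators of these atoms, which is isometric to $\ell_p^{3^n}$. Restricting Rosenthal's projection to this finite-dimensional subspace delivers $P^{(n)}_{p,w}$ with norm at most $K_p$.

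For~(iii), the two-sided estimate comes from combining~\eqref{eq:xp-isom-p<2-n} with a direct computation of the dual norm of $\sum_{j\in A}e^{(n)*}_j$ in $\big(E^{(n)}_{p',w}\big)^*$. Writing $k=\abs{A}$ and using the definition
\[
\bignorm{(a_j)}_{p',w}=\Big(\sum_{j=1}^n\abs{a_j}^{p'}\Big)^{1/p'}\vee w\Big(\sum_{j=1}^n\abs{a_j}^2\Big)^{1/2},
\]
one has
\[
\Bignorm{\sum_{j\in A}e^{(n)*}_j}=\sup\Big\{\sum_{j\in A}a_j\;:\;\bignorm{(a_j)}_{p',w}\leq 1\Big\}.
\]
Both constraints and the linear functional being maximised are invariant under permutations of $A$, so by convexity the supremum is attained at a vector supported on $A$ with constant positive entries $c=k^{-1/p'}\wedge\tfrac{1}{w}k^{-1/2}$. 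The resulting value is $ck=k^{1/p}\wedge\tfrac{1}{w}k^{1/2}$, which is exactly the upper bound in~(iii); the matching lower bound (with factor $1/K_p$) then follows from the first inequality in~\eqref{eq:xp-isom-p<2-n}. The only genuinely non-trivial input is the complementation in~(ii); the rest is either elementary ($1$-unconditionality in~(i)) or the routine dual-norm calculation sketched above.
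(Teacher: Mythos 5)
Your proof is correct, and it is the standard argument behind the cited result; note that the paper itself offers no proof here but simply quotes \cite{sz:18}*{Proposition~1}. Parts (i) and (iii) are exactly as you describe: $1$-unconditionality from independence and symmetry of the $f^{(n)}_j$, and for (iii) the two-sided equivalence~\eqref{eq:xp-isom-p<2-n} combined with the dual-norm computation in $\big(E^{(n)}_{p',w}\big)^*$, where averaging over permutations of $A$ (valid since the $\norm{\cdot}_{p',w}$-ball is permutation-invariant and convex) reduces to constant coefficient vectors and gives the exact value $k^{1/p}\wedge\tfrac1w k^{1/2}$.

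One remark on (ii). Your blanket phrasing --- that for \emph{any} sequence of independent mean-zero random variables in $L_p$, $1<p<2$, the closed span is $K_p$-complemented --- is not what Rosenthal proves. Rosenthal's uniform complementation theorem in that generality is a $p>2$ result; the $1<p<2$ case used here is specific to the $3$-valued symmetric variables building $F^{(n)}_{p,w}$ and is obtained by dualising the $p'>2$ projection through the identification~\eqref{eq:xp-isom-p<2-n}, which is exactly the duality route you signal parenthetically. The subsequent observation, that the $3$-valued variables generate a $\sigma$-algebra with at most $3^n$ atoms so that $F^{(n)}_{p,w}\subseteq\ell_p^{3^n}$ and restricting the projection costs nothing, is correct. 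So the argument you actually run is sound; only the general statement you quote is overbroad and should be stated for the specific Rosenthal-type spans rather than for arbitrary independent sequences.
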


The lower estimate of the lower fundamental function in
Lemma~\ref{L:2.2} below follows easily from~\cite{sz:18}*{Lemma 3} and
its proof.

\begin{lem}
  \label{L:2.2}
  Given an increasing sequence $(k_n)$ in $\bn$ and a decreasing
  sequence $(w_n)$ in $(0,1]$, let $1<p\leq r<2$ and let
  $Z$ be a $1$-unconditional sum of $\big(F^{(k_n)}_{p,w_n}\big)$
  satisfying a lower $\ell_r$-estimate with constant~$1$. Then with
  respect to the unconditional basis
  $(f^{(k_n)}_j:\,n\in\bn,\ 1\leq j\leq k_n)$ of $Z$, for all
  $m\in\bn$ we have
  \[  
  \lambda_Z(m) \geq \frac1{K_p}\bigg(
  \Big(\frac{m}{2}\Big)^{1/r}\meet\bigg(\sum_{j=1}^{s-1}
  \frac{k_j}{w_j^2}+\frac{t}{w_{s}^2}\bigg)^{1/2}\bigg)
  \]
  where $s=s(m)\in\bn$ is maximal so that
  $\sum_{j=1}^{s-1} k_j\leq m/2$ and $t=m/2-\sum_{j=1}^{s-1} k_j$. 
  In particular, if $m\leq k_1$ then
  \[
  \lambda_Z(m) \geq \frac1{2K_p} \bigg(m^{1/r}
  \meet\frac{m^{1/2}}{w_1}\bigg)\ .
  \]
\end{lem}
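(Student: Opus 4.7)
The plan is to reduce the bound on $\lambda_Z(m)$ to the local fundamental function estimates of Proposition~\ref{prop:properties-of-Fn}(iii) by exploiting the unconditional sum structure of $Z$, and then to run a two-case analysis based on whether the $\ell_p$-part or the weighted $\ell_2$-part of those local estimates dominates.

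Fix a subset $A$ of the index set of the basis of $Z$ with $\abs{A}=m$; writing $A_n=\{j:(n,j)\in A\}$ and $a_n=\abs{A_n}$, we have $\sum_n a_n = m$. By Proposition~\ref{prop:properties-of-Fn}(iii),
\[
\Bignorm{\sum_{j\in A_n}f^{(k_n)}_j}\geq \frac{1}{K_p}\bigl(a_n^{1/p}\meet a_n^{1/2}/w_n\bigr).
\]
I would split $\bn$ into $S_1=\{n:a_n^{1/p}\leq a_n^{1/2}/w_n\}$ and $S_2=\bn\setminus S_1$, and set $M_i=\sum_{n\in S_i}a_n$, so that at least one of $M_1,M_2$ is $\geq m/2$. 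If $M_1\geq m/2$, apply the lower $\ell_r$-estimate of $Z$ to get
\[
\Bignorm{\sum_{(n,j)\in A}f^{(k_n)}_j}\geq \frac{1}{K_p}\Bigl(\sum_{n\in S_1}a_n^{r/p}\Bigr)^{1/r}.
\]
Since $p\leq r$ and each non-zero $a_n$ is a positive integer, $a_n^{r/p}\geq a_n$, so the right-hand side is $\geq \frac{1}{K_p}M_1^{1/r}\geq \frac{1}{K_p}(m/2)^{1/r}$, which is the first branch.

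If instead $M_2\geq m/2$, I would use that $r\leq 2$ implies $Z$ satisfies a lower $\ell_2$-estimate with constant $1$ (because the $\ell_r$-norm pointwise dominates the $\ell_2$-norm on non-negative sequences), to obtain
\[
\Bignorm{\sum_{(n,j)\in A}f^{(k_n)}_j}\geq \frac{1}{K_p}\Bigl(\sum_{n\in S_2}a_n/w_n^2\Bigr)^{1/2}.
\]
The remaining task is then to minimise $\sum_n b_n/w_n^2$ over non-negative sequences $(b_n)$ satisfying $b_n\leq k_n$ and $\sum_n b_n\geq m/2$. Since $(w_n)$ is decreasing, $(1/w_n^2)$ is increasing in $n$, so a greedy argument shows the minimum equals $\sum_{j=1}^{s-1}k_j/w_j^2 + t/w_s^2$ for the $s,t$ in the statement. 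Setting $b_n=a_n$ for $n\in S_2$ and $b_n=0$ otherwise verifies the constraints, so $\sum_{S_2}a_n/w_n^2$ dominates this minimum, which gives the second branch.

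The main delicate point is this greedy optimisation, together with the observation that extending $(a_n)_{n\in S_2}$ by zeros to all of $\bn$ keeps us within the feasible set. For the final assertion, $m\leq k_1$ forces $s=1$ and $t=m/2$, so the second branch simplifies to $(m/2)^{1/2}/w_1$; the factor $\sqrt{2}$ from $(m/2)^{1/r}$ and $(m/2)^{1/2}$ is absorbed into the leading constant to yield the stated form with $1/(2K_p)$.
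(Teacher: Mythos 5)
Your proof is correct. The paper itself does not give an argument for Lemma~\ref{L:2.2} --- it simply says that the estimate ``follows easily from [SZ18, Lemma~3] and its proof'' --- so what you have done is supply the details that the authors delegate to the predecessor paper. Your route is the natural one given the tools provided here: decompose the index set $A$ blockwise with block sizes $a_n=\lvert A_n\rvert$, invoke Proposition~\ref{prop:properties-of-Fn}(iii) for the blockwise lower bound $\frac1{K_p}\bigl(a_n^{1/p}\meet a_n^{1/2}/w_n\bigr)$, split $\bn$ according to which term of that minimum is active, and then push the lower $\ell_r$-estimate (and the induced lower $\ell_2$-estimate, since $r\leq 2$) down to a scalar sum. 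All the individual steps check out: $a_n^{r/p}\geq a_n$ for integer $a_n\geq 1$ because $r/p\geq 1$; the greedy linear-programming argument for $\min\sum b_n/w_n^2$ subject to $0\leq b_n\leq k_n$ and $\sum b_n\geq m/2$ does give exactly the expression $\sum_{j<s}k_j/w_j^2+t/w_s^2$ because $1/w_n^2$ is non-decreasing; and the extension of $(a_n)_{n\in S_2}$ by zeros is a feasible point for that LP. The ``in particular'' reduction ($m\leq k_1\Rightarrow s=1$, $t=m/2$) and the absorption of $2^{1/r}\leq 2$ and $\sqrt2\leq 2$ into the constant $\frac1{2K_p}$ are likewise fine (the extra factor lost in the $\ell_r$ branch is $2^{1/r}$ rather than $\sqrt2$, but $\leq 2$ is all that is used). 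No gaps.
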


Let us denote by $\big(e^{(n)}_{2,j}\big)_{j=1}^n$ the unit vector
basis of $\ell_2^n$. We will need the following lemma
from~\cite{sz:18}. Recall that $p'$ is the conjugate index of $p$.

\begin{lem}\cite{sz:18}*{Lemma 5}
 \label{lem:F_n-to-ell_2^n}
 Given $1<p<2$ and $p<q<\infty$, let $n\in\bn$, $w\in(0,1]$ and
 $F=F^{(n)}_{p,w}$. Let $y=\sum_{j=1}^n y_jf^{(n)}_j\in F$ with
 $\norm{y}_{F}\leq 1$, and let
 $\yt=\sum_{j=1}^n y_je^{(n)}_{2,j}\in\ell_2^n$. If
 $\norm{y}_\infty=\max_j\abs{y_j}\leq\sigma\leq 1$ and
 $w\leq\sigma^{\frac12-\frac1{p'}}= \sigma^{\frac1p-\frac1{2}}$, then
 \[
 \norm{\yt}_{\ell_2^n}^q \leq D\sigma^s\cdot
 \norm{y}_F^p\ ,
 \]
 where $D$ only depends on $p$ and $q$, and
 $s=\min\big\{\frac{q}2-\frac{p}2\ ,\ \frac{q}2-\frac{q}{p'}\big\}$.
\end{lem}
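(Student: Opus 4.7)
The plan is to translate $\|y\|_F$ via Rosenthal's isomorphism~\eqref{eq:xp-isom-p<2-n} into an inf-convolution norm, and then exploit the bound $\|y\|_\infty\leq\sigma$ through a truncation. Since the basis $(f^{(n)}_j)$ of $F^{(n)}_{p,w}$ is $K_p$-equivalent to the dual basis of $E^{(n)}_{p',w}$, and the norm $\|\cdot\|_{p',w}=\|\cdot\|_{p'}\vee w\|\cdot\|_2$ is a maximum, its dual is the inf-convolution
\[
\inf\bigl\{\|u\|_p+w^{-1}\|v\|_2:\,y=u+v\bigr\}.
\]
Hence one can fix a decomposition $y=u+v$ with $\|u\|_p\leq K_p\|y\|_F$ and $\|v\|_2\leq K_pw\|y\|_F$.

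Next I replace $u$ by its truncation at height $\sigma$: set $\tilde u_j=\sgn(u_j)\min(|u_j|,\sigma)$ and $\tilde v=y-\tilde u$. Since $|y_j|\leq\sigma$, whenever $|u_j|>\sigma$ the entry $\tilde v_j=y_j-\sgn(u_j)\sigma$ has modulus at most $2\sigma$, and the number of such coordinates is at most $\sigma^{-p}\|u\|_p^p$. Therefore
\[
\|\tilde v\|_2^2\leq\|v\|_2^2+4\sigma^{2-p}\|u\|_p^p,\qquad\|\tilde u\|_p\leq\|u\|_p,\qquad\|\tilde u\|_\infty\leq\sigma.
\]
Since $p<2$, the last two bounds give $\|\tilde u\|_2^2\leq\sigma^{2-p}\|\tilde u\|_p^p$, and combining all of these with the hypothesis $w\leq\sigma^{1/p-1/2}$ produces
\[
\|\tilde y\|_{\ell_2^n}=\|y\|_2\leq\|\tilde u\|_2+\|\tilde v\|_2\leq C_p\bigl(\sigma^{1-p/2}\|y\|_F^{p/2}+\sigma^{1/p-1/2}\|y\|_F\bigr).
\]

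Raising to the $q$-th power via $(a+b)^q\leq 2^{q-1}(a^q+b^q)$ produces two summands, $\sigma^{q(2-p)/2}\|y\|_F^{pq/2}$ and $\sigma^{q(2-p)/(2p)}\|y\|_F^q$. A direct check shows that both $\sigma$-exponents are at least $s=\min\{(q-p)/2,q(2-p)/(2p)\}$. Because $\|y\|_F\leq 1$ and $q\geq p$, the second summand is bounded by $\sigma^s\|y\|_F^p$. For the first summand, when $q\geq 2$ the exponent $pq/2\geq p$ together with $\|y\|_F\leq 1$ again suffices. In the remaining range $p<q<2$, one observes that $p^2/(2(p-1))>2>q$ for $p<2$, which forces $s=(q-p)/2$; one then interpolates with the elementary bound $\|y\|_2\leq 2K_p\|y\|_F$ (coming from $\|u\|_2\leq\|u\|_p$, valid for $p\leq 2$, together with $\|v\|_2\leq K_p\|y\|_F$), takes the minimum of the refined and elementary estimates, and analyses the regimes $\sigma\leq\|y\|_F$ and $\sigma>\|y\|_F$ separately to obtain the bound $D\sigma^{(q-p)/2}\|y\|_F^p$.

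The main technical obstacle is the truncation estimate $\|\tilde v\|_2^2\leq\|v\|_2^2+4\sigma^{2-p}\|u\|_p^p$; once this is in place, the remainder of the argument is careful bookkeeping of exponents against the two candidates in the minimum defining $s$.
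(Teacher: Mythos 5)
Your proof is correct. The paper itself only cites \cite{sz:18}*{Lemma 5} without reproducing the argument, and your route — passing through Rosenthal's isomorphism~\eqref{eq:xp-isom-p<2-n} to the inf-convolution form $\inf\{\norm{u}_p+w^{-1}\norm{v}_2:\,y=u+v\}$ of the norm on $\big(E^{(n)}_{p',w}\big)^*$, then truncating $u$ at height $\sigma$ to force $\norm{\tilde u}_\infty\leq\sigma$ and control $\norm{\tilde v}_2$ via Chebyshev — is the natural one and matches the underlying technique in \cite{sz:18}; I have verified the truncation estimate, the resulting bound $\norm{y}_2\leq C_p\big(\sigma^{1-p/2}\norm{y}_F^{p/2}+\sigma^{1/p-1/2}\norm{y}_F\big)$, and the exponent bookkeeping in both the case $q\geq 2$ and the case $p<q<2$ (where interpolating against $\norm{y}_2\lesssim\norm{y}_F$ and splitting into $\sigma\lessgtr\norm{y}_F$ indeed closes the gap, using in the end only that $q\geq p$).
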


\begin{proof}[Proof of Theorem~\ref{thm:ufdds-with-lower-upper-estimates}]
  Choose $\eta\in(0,1)$ so that $\eta<\frac1{r}-\frac1{2}$ and for
  each $n\in\bn$ let $w_n=n^{-\eta}$.
  After passing to a complemented subspace of $X$ using
  Proposition~\ref{prop:properties-of-Fn}, and after passing to an
  equivalent norm, we may assume that $X$ is a $1$-unconditional sum
  of $(E_n)$ satisfying a lower $\ell_r$-estimate with constant~$1$,
  where $E_n=F^{(n)}_{p,w_n}$ for all $n\in\bn$. Also, using Dvoretzky's
  theorem, after passing to a subspace of $Y$ with suitable renorming,
  we may assume that $Y$ is a $1$-unconditional sum of $(F_n)$
  satisfying an upper $\ell_q$-estimate with constant~$1$, where
  $F_n=\ell_2^n$ for all $n\in\bn$ (\cf Remark following
  condition~\eqref{E:2.2}).

  We will now follow the scheme developed in
  Section~\ref{sec:general-condition}.
  For each $m\in\bn$ we let $l_m=m$, $x_{m,i}=f^{(m)}_i\in E_m$ and
  $f_{m,i}=e^{(m)}_{2,i}\in F_m$ for $1\leq i\leq m$, and define
  $T_m\colon E_m\to F_m$ by
  $T_m(x)=\sum_{i=1}^mx^*_{m,i}(x)f_{m,i}$, where $x^*_{m,i}$ are the
  biorthogonal functionals to the $1$-unconditional basis
  $(x_{m,i})_{i=1}^m$ of $E_m$. We are thus in the situation
  described in Proposition~\ref{P:2.2.alt}. It remains to verify
  assumptions~\ref{E:2.3d.alt} and~\ref{E:2.3e.alt} of the
  proposition as well as the general
  assumptions~\eqref{E:2.2a},~\eqref{E:2.2b} and~\eqref{E:2.2d}.

  Assumption~\eqref{E:2.2b} is clear. Next, it follows from
  Proposition~\ref{prop:properties-of-Fn}(i) that 
  $\sup_n\norm{T_n}$ is bounded by the cotype-2 constant of
  $L_p$. Since $r<q$, it follows from the upper $\ell_q$-estimate on
  $Y$ and the lower $\ell_r$-estimates of $X$ that~\eqref{E:2.2a}
  holds.

  Using Proposition~\ref{prop:properties-of-Fn} again, we note that
  \[
  \varf_m(l)\leq l^{\frac{1}{p}} \meet \tfrac{1}{w_m} l^{\frac12}
  \]
  for all $l\leq m$ in $\bn$, and condition~\eqref{E:2.2d} follows.

  We next turn to condition~\ref{E:2.3d.alt} of
  Proposition~\ref{P:2.2.alt}. Fix $\vare>0$ and 
  $M\in\infin{\bn}$. Choose $\delta\in(0,1)$ so that
  $\big(D\delta^s\big)^{\frac{r}{p}}<\vare^q$, where $D$ and $s$ are
  given by Lemma~\ref{lem:F_n-to-ell_2^n} with $q$ replaced by
  $\frac{pq}{r}$. Then choose $N\in\infin{M}$ so that
  $w_n\leq\delta^{\frac1p-\frac1{2}}$ for all $n\in N$. Now fix $x\in
  B_{X_N}$ with
  $\sup_{n\in N,\ 1\leq j\leq n}\abs{x^*_{n,j}(x)}\leq\delta$. Writing
  $x=\sum_{n\in N}\sum_{j=1}^n a_{n,j}x_{n,j}$, we have
  $\abs{a_{n,j}}\leq\delta$ for all $n\in N$ and $1\leq j\leq n$. It
  follows from Lemma~\ref{lem:F_n-to-ell_2^n} that
  \[
  \left(\sum_{j=1}^n\abs{a_{n,j}}^2\right)^{\frac{pq}{2r}}\leq
  D\delta^s \Bignorm{\sum_{j=1}^na_{n,j}x_{x,j}}_{E_n}^p\ ,
  \]
  and hence
  \[
  \left(\sum_{j=1}^n\abs{a_{n,j}}^2\right)^{\frac{q}{2}}\leq
  \big(D\delta^s\big)^{\frac{r}{p}}
  \Bignorm{\sum_{j=1}^na_{n,j}x_{x,j}}_{E_n}^r
  \]
  for every $n\in N$. Summing over $n\in N$ and using the lower
  $\ell_r$-estimate of $X$ and the upper $\ell_q$-estimate of $Y$, we
  obtain
  \[
  \norm{T_N(x)}_Y^q\leq
  \big(D\delta^s\big)^{\frac{r}{p}}\norm{x}_{X_N}^r<\vare^q\ ,
  \]
  which completes the proof of condition~\ref{E:2.3d.alt}.

  To verify condition~\ref{E:2.3e.alt} of Proposition~\ref{P:2.2.alt},
  we fix $\delta,\vare\in(0,1)$ and 
  $M\in\infin{\bn}$. We first choose $m\in M$ so that
  $m\vare\geq1$ and
  \[
  \frac{2K_pm^{\eta+\frac12}}{\mt^{\frac{1}{r}}}<\delta\qquad\text{where
    }\mt=\intp{\vare m}\ .
  \]
  We then choose $N\in\infin{M}$ so that $n=\min(N)$ satisfies
  $\mt^{\frac1{r}}\leq \mt^{\frac12}/w_n$. We now apply
  Lemma~\ref{L:2.3} with $G=E_m$, $l=m$ and $Z=X_N$. First note that
  by Proposition~\ref{prop:properties-of-Fn}(iii), we have
  \[
  \varf_m(m)\leq m^{\frac1{p}}\meet
  \frac{m^{\frac12}}{w_m}=m^{\eta+\frac12}\ .
  \]
  On the other hand, it follows from Lemma~\ref{L:2.2} that
  \[
  \lambda_{X_N}(\mt)\geq \frac1{2K_p} \bigg(\mt^{1/r}
  \meet\frac{\mt^{1/2}}{w_n}\bigg)=\frac{\mt^{1/r}}{2K_p}
  \]
  by the choice of $N$. Hence,
  $\varf_m(m)/\lambda_{X_N}(\intp{\vare{}m})\leq\frac{2K_pm^{\eta+\frac12}}{\mt^{\frac{1}{r}}}<\delta$
  by the choice of $m$. An application of Lemma~\ref{L:2.3} shows that
  for any $B\in\cL(E_m,X_N)$ with $\norm{B}\leq 1$ we have
  \[
  \sabs{\left\{(n,j):\,n\in N,\ 1\leq j\leq n,\ \abs{x^*_{n,j}(
      Bx_{m,i})}>\delta\textrm{ for some }1\leq i\leq m\right\}}<\vare
  m\ .
  \]
  This shows that~\ref{E:2.3e.alt} of Proposition~\ref{P:2.2.alt}
  holds and the proof of the theorem is thus complete.
\end{proof}

\begin{rem}
  It is not difficult to prove (\cf~\cite{sz:18}*{Proposition~8}) that
  the $2^\continuum$ closed ideals constructed in the proof of
  Theorem~\ref{thm:ufdds-with-lower-upper-estimates} are all contained
  in the ideal of finitely strictly singular operators.
\end{rem}

\begin{cor}
  \label{cor:upper-and-lower-estimate-examples}
  Let $1<p<q<\infty$ and let $p'$ and $q'$ denote the conjugate
  indices of $p$ and $q$, respectively. Let $X$ be one of the spaces
  $\ell_p$, $T_p$ or $T_{p'}^*$. Let $Y$ be one of the spaces
  $\ell_q$, $T_q$ or $T_{q'}^*$. Then $\cL(X,Y)$ has exactly
  $2^\continuum$ closed ideals. It follows that $\cL(X\oplus Y)$ also
  has exactly $2^\continuum$ closed ideals.
\end{cor}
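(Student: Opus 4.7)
The plan is to derive this corollary from Theorem~\ref{thm:ufdds-with-lower-upper-estimates}, applied either directly or to adjoints, together with a cardinality upper bound. For the upper bound, observe that each of the spaces in question is separable, so every $T\in\cL(X,Y)$ is determined by its values on a countable dense subset of $X$; hence $\abs{\cL(X,Y)}\leq\continuum^{\aleph_0}=\continuum$, and the collection of closed ideals has cardinality at most $2^\continuum$.

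For the lower bound in the range $1<p<2$, I would apply Theorem~\ref{thm:ufdds-with-lower-upper-estimates} directly with $r=p$. This requires each candidate for $X$ to carry a UFDD with a lower $\ell_p$-estimate whose components contain uniformly complemented copies of $\ell_p^m$, and each candidate for $Y$ to carry a UFDD with an upper $\ell_q$-estimate. The first of these is obvious for $\ell_p$, follows for $T_p$ from the asymptotic $\ell_1$-structure of the Tsirelson basis after $p$-convexification (with the blocks chosen sparsely enough to be uniformly $\ell_p^m$), and follows for $T_{p'}^*$ by dualizing the corresponding uniformly complemented $\ell_{p'}^m$-structure in $T_{p'}$. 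The second is obvious for $\ell_q$ and follows for $T_q$, $T_{q'}^*$ from $q$-convexification and duality in the same way.

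When $p\geq 2$ the interval $[p,2)$ is empty, so the direct approach fails and I would instead pass to adjoints: since $X$ and $Y$ are reflexive, the map $T\mapsto T^*$ is an isometric isomorphism $\cL(X,Y)\to\cL(Y^*,X^*)$ carrying closed ideals bijectively to closed ideals. Now $1<q'<p'\leq 2$, and the spaces $Y^*\in\{\ell_{q'},T_q^*,T_{q'}\}$ play the role of ``$X$'' (satisfying a lower $\ell_{q'}$-estimate with uniformly complemented $\ell_{q'}^m$-subspaces) while $X^*\in\{\ell_{p'},T_p^*,T_{p'}\}$ plays the role of ``$Y$'' (satisfying an upper $\ell_{p'}$-estimate). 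Theorem~\ref{thm:ufdds-with-lower-upper-estimates} then applies with $r=q'$ and produces $2^\continuum$ closed ideals in $\cL(Y^*,X^*)$, hence in $\cL(X,Y)$.

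Finally, to pass from $\cL(X,Y)$ to $\cL(X\oplus Y)$, I would invoke the general injection described in the paragraph immediately preceding Proposition~\ref{P:2.2.alt}: with $V=X$ and $W=Y$ regarded as $1$-complemented subspaces of $X\oplus Y$, the map $\cJ\mapsto\cJt$ embeds closed ideals of $\cL(X,Y)$ injectively into closed ideals of $\cL(X\oplus Y)$, delivering $\geq 2^\continuum$; and the same cardinality argument as before bounds the total above by $2^\continuum$, giving equality. The main obstacle in the whole argument is the first step of the lower bound: confirming that each of the six Tsirelson-type spaces carries a UFDD with the required lower or upper $\ell$-estimate and the uniformly complemented $\ell_p^m$-blocks on the $X$ side. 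These facts, however, are by now standard consequences of the implicit definition of the Tsirelson norm, the behavior of $p$-convexification on lower and upper lattice estimates, and duality, so the argument proceeds without further surprises.
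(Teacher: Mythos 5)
Your overall strategy — apply Theorem~\ref{thm:ufdds-with-lower-upper-estimates} directly when $1<p<2$, pass to adjoints when $p\geq 2$, and then feed the resulting ideals into $\cL(X\oplus Y)$ via an injection — is the same as the paper's, and the upper‑bound cardinality argument is standard and correct. However, there is a genuine error in the specific choice $r=p$: the $p$‑convexified Tsirelson space $T_p$ does \emph{not} satisfy a lower $\ell_p$‑estimate. Its unit vector basis is dominated by the unit vector basis of $\ell_p$ (this is the upper estimate) and dominates the unit vector basis of $\ell_r$ only for $r>p$; if it also satisfied a lower $\ell_p$‑estimate it would be isomorphic to $\ell_p$. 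The asymptotic $\ell_1$‑structure you invoke gives $\ell_p^m$‑blocks locally, but it does not improve the \emph{global} lower estimate across blocks to index $p$. The fix is easy and is what the paper does implicitly: take $r\in(p,\min(2,q))$ (and, when $Y=T_{q'}^*$, also replace $q$ by some $q''<q$ for the upper estimate, since $T_{q'}^*$ satisfies an upper $\ell_{q''}$‑estimate for all $q''<q$ but not for $q''=q$). The same adjustment is needed in your dualized case, where $r=q'$ fails for $Y^*=T_{q'}$. For $X=\ell_p$ and $X=T_{p'}^*$ the choice $r=p$ is actually fine, since $T_{p'}^*$ inherits a genuine lower $\ell_p$‑estimate by dualizing the upper $\ell_{p'}$‑estimate of $T_{p'}$.

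Your treatment of $\cL(X\oplus Y)$ genuinely differs from the paper's and is arguably cleaner. The paper first proves that every operator $Y\to X$ is compact and then exhibits, for each non‑trivial ideal $\cJ\subset\cL(X,Y)$, an explicit $2\times 2$ matrix ideal in $\cL(X\oplus Y)$. You instead invoke the general injectivity observation stated just before Proposition~\ref{P:2.2.alt}, specialized to $V=X$ and $W=Y$ sitting as complemented subspaces of $X\oplus Y$; this is correct (the map $\cJ\mapsto\cJt$ is recovered by compressing with the canonical projection and inclusion), applies without any compactness hypothesis on $\cL(Y,X)$, and immediately gives at least $2^\continuum$ ideals in $\cL(X\oplus Y)$. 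The paper's approach has the small virtue of describing the resulting ideals explicitly, but for the present counting statement your route is shorter.
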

\begin{proof}
  We recall the following properties of the $p$-convexified Tsirelson
  space $T_p$ which can be found in~\cite{cas-shur:89}. Its unit
  vector basis $(t_n)$ is normalized, $1$-unconditional, dominated by
  the unit vector basis of $\ell_p$ and dominates the unit vector
  basis of $\ell_r$ whenever $p<r<\infty$. Moreover, given a sequence
  $(I_n)$ of consecutive intervals of positive integers with
  $1\in I_1$, if we let $E_n=\spn\{t_i:\,i\in I_n\}$ and pick any
  $k_n\in I_n$ for every $n\in\bn$, then $T_p$ is isomorphic to the
  unconditional sum of $(E_n)$ with respect to the unconditional basis
  $(t_{k_n})$. It follows from
  Theorem~\ref{thm:ufdds-with-lower-upper-estimates} that $\cL(X,Y)$
  has exactly $2^\continuum$ closed ideals when $1<p<2$, and the same
  then holds by duality when $2\leq p<\infty$.

  It follows by standard basis techniques that every operator from $Y$
  to $X$ is compact. Hence non-trivial closed ideals of $\cL(X,Y)$
  correspond to non-trivial closed ideals of $\cL(X\oplus Y)$ as
  follows. We think of operators on $X\oplus Y$ as $2\times 2$
  matrices in the obvious way. Given a non-trivial closed ideal $\cJ$
  in $\cL(X,Y)$, it is easy to see that
  \[
  \cJt=\left\{%
  \begin{pmatrix}
    A & B\\
    C & D
  \end{pmatrix}:\, A\in\cK(X),\ B\in\cL(Y,X),\ C\in\cJ,\ D\in\cK(Y)
  \right\}
  \]
  is a closed ideal of $\cL(X\oplus Y)$, and moreover, the map
  $\cJ\mapsto\cJt$ is injective. It follows that $\cL(X\oplus Y)$ also
  has $2^\continuum$ closed ideals, and this completes the proof of
  the theorem.
\end{proof}

As mentioned in the Introduction, the above result implies the recent
result of Johnson and Schechtman~\cite{js:20} that $\cL(L_p)$ contains
$2^\continuum$ closed ideals for $1<p\neq 2<\infty$.

\begin{cor}
  Let $1<p\neq 2<\infty$. The algebra $\cL(L_p)$ of operators on $L_p$
  contains exactly $2^\continuum$ closed ideals.
\end{cor}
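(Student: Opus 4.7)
The plan is to derive this from Corollary~\ref{cor:upper-and-lower-estimate-examples} (applied with one of the two indices equal to $2$) together with the classical fact that $\ell_p\oplus\ell_2$ is complemented in $L_p$ when $1<p\neq 2<\infty$, and to close with a routine cardinality bound for the upper estimate.

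First I would apply Corollary~\ref{cor:upper-and-lower-estimate-examples} in one of two ways depending on the position of $p$ relative to $2$. If $1<p<2$, the corollary with the pair of indices $(p,q)=(p,2)$ and $X=\ell_p$, $Y=\ell_2$ yields that $\cL(\ell_p\oplus\ell_2)$ contains exactly $2^\continuum$ closed ideals. If $2<p<\infty$, the same corollary applied with $(p,q)=(2,p)$, $X=\ell_2$, $Y=\ell_p$ gives the same conclusion. In either case, $\cL(\ell_p\oplus\ell_2)$ has exactly $2^\continuum$ closed ideals.

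Second, I would invoke the classical fact that $\ell_p\oplus\ell_2$ is isomorphic to a complemented subspace of $L_p$ for $1<p\neq 2<\infty$. Concretely, a sequence of disjointly supported normalized characteristic functions in $L_p([0,1/2])$ spans an isometrically embedded and complemented copy of $\ell_p$, while the Rademacher sequence in $L_p([1/2,1])$ spans a complemented copy of $\ell_2$ by Khintchine's inequality; their direct sum realizes $\ell_p\oplus\ell_2$ as a complemented subspace of $L_p$. Applying the injection $\cJ\mapsto\cJt$ from closed ideals of $\cL(V)$ into closed ideals of $\cL(X)$ for a complemented subspace $V\subset X$ (discussed in the paragraph preceding Proposition~\ref{P:2.2.alt} with $V=\ell_p\oplus\ell_2$ and $X=L_p$), we obtain at least $2^\continuum$ distinct closed ideals in $\cL(L_p)$.

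Third, for the matching upper bound I would argue purely by a cardinality count. Since $L_p$ is separable, every bounded operator on $L_p$ is determined by its values on a fixed countable norm-dense subset, so $\abs{\cL(L_p)}\leq\continuum$. Hence the power set of $\cL(L_p)$ has cardinality at most $2^\continuum$, which bounds the number of closed ideals from above.

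The substantive work is entirely contained in Corollary~\ref{cor:upper-and-lower-estimate-examples}; the only tasks remaining here are to quote the classical complementation of $\ell_p\oplus\ell_2$ in $L_p$ and to perform the cardinality count, neither of which presents a genuine obstacle.
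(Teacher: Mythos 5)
Your proof is correct and follows essentially the same route the paper intends: it was noted in the Introduction that $\ell_p\oplus\ell_2$ is complemented in $L_p$, and the corollary is presented as an immediate consequence of Corollary~\ref{cor:upper-and-lower-estimate-examples} via the standard complemented-subspace transfer of closed ideals, with the trivial separability bound supplying the upper estimate. The additional detail you supply (explicit realization of the complemented copies, the $\cJ\mapsto\cJt$ map, and the cardinality count) is all consistent with the paper's sketch.
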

  
\section{Applications II}
\label{S:4}
    
As in the previous section, we will apply the general process
developed in Section~\ref{sec:general-condition} to establish a class
of pairs $(X,Y)$ of Banach spaces for which $\cL(X,Y)$ contains
$2^\continuum$ distinct closed ideals. However, we will be using
Lemma~\ref{L:RIP} in this section as opposed to Lemma~\ref{L:2.3}.

Let $1\leq p<q\leq\infty$. Suppose that
$\big(\ell_2^{n}\big)_{n=1}^\infty$ is a UFDD for a Banach space $X$
with a lower $\ell_p$-estimate and that
$\big(\ell_\infty^{n}\big)_{n=1}^\infty$ is a UFDD for a Banach space
$Y$ with an upper $\ell_q$-estimate. We will prove that $\cL(X,Y)$
contains $2^\continuum$ distinct closed ideals. As
$\big(\bigoplus_{n=1}^\infty\ell_2^{n}\big)_{\ell_p}$ is complemented
in $\ell_p$ for all $1<p<\infty$, we obtain that $\cL(\ell_p,\co)$
contains $2^{\continuum}$ distinct closed ideals for all $1<p<\infty$,
which proves that our general setup incorporates the results presented 
in~\cite{fsz:20}. By duality, we obtain  that $\cL(\ell_1,\ell_p)$ and
$\cL(\ell_p,\ell_\infty)$ each contain $2^{\continuum}$ distinct
closed ideals. Hence, the cardinality of the set of closed ideals is
exactly $2^\continuum$ for each of $\cL(\ell_p\oplus\co)$,
$\cL(\ell_p\oplus\ell_\infty)$ and $\cL(\ell_1\oplus\ell_p)$ for all
$1<p<\infty$. Note that we also obtain that the cardinality of the set
of closed ideals in
$\cL\big(\big(\bigoplus_{n=1}^\infty\ell_2^{n}\big)_{\ell_1}\oplus\co\big)$
is $2^\continuum$, however we are not able to conclude anything about
$\cL({\ell_1}\oplus \co)$ as the finite-dimensional spaces $\ell_2^n$
are not uniformly complemented in $\ell_1$.

In the previous section, for each $n\in\bn$, the operator
$T_n\colon E_n\to\ell_2^n$ was the formal identity between two
$n$-dimensional Banach spaces. Now, we will choose sequences
$k_1<l_1<k_2<l_2<\dots$ and operators
$T_n\colon\ell_2^{k_n}\to\ell_\infty^{l_n}$. When considered as a
matrix, each $T_n$ will be much taller than it is wide.  

Let $1\leq p<\infty$.  The probabilistic proofs for the existence of
RIP (Restricted Isometry Property) matrices from compressed
sensing~\cite{foucart-rauhut:13} show that there exist sequences
$k_1<l_1<k_2<l_2<\dots$ with
$\lim_{n\to\infty} k_n^{\max(1,p/2)} l_n^{-1}=0$ such that if unit
vectors $(x_{n,j})_{j=1}^{l_n}$ are randomly chosen with uniform
distribution in $\ell_2^{k_n}$ then with high probability we have for
all $J\subset\{1,2,\dots,l_n\}$ with $\abs{J}\leq l_{n-1}$ that
\begin{equation}
  \label{E:RIP1}
  \frac{1}{2}\sum_{j\in J}\abs{a_j}^2\leq\Bignorm{\sum_{j\in
      J}a_jx_{n,j}}^2\leq 2\sum_{j\in J}\abs{a_j}^2\textrm{ for all
  }(a_j)_{j\in J}\subset\br\ ,
\end{equation}
\begin{equation}
  \label{E:RIP2}
  \sum_{j\in J}\abs{\ip{x}{x_{n,j}}}^2\leq 2\norm{x}^2\textrm{ for all
  }x\in\ell_2^{k_n}\ .
\end{equation}
We now show how this
construction satisfies the conditions of Proposition~\ref{P:2.2.alt}
and Lemma~\ref{L:RIP}. 

\begin{thm}
  \label{T:UP}
  Let $1\leq p<q\leq\infty$.  Suppose that $(\ell_2^{n})_{n=1}^\infty$
  is a UFDD for $X$ with a lower $\ell_p$-estimate and that
  $(\ell_\infty^{n})_{n=1}^\infty$ is a UFDD for $Y$ with an upper
  $\ell_q$-estimate. Then $\cL(X,Y)$ contains $2^{\continuum}$
  distinct closed ideals.
\end{thm}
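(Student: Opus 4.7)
The plan is to invoke Proposition~\ref{P:2.2.alt} with the hard condition~\ref{E:2.3e.alt} verified through Lemma~\ref{L:RIP}, specialised to parameters $s=2$ and $t=p$. The RIP vectors described in \eqref{E:RIP1} and \eqref{E:RIP2} will supply precisely the data needed.

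First I would pass to subsequences of both UFDDs to obtain sequences $k_1<l_1<k_2<l_2<\cdots$ with $\lim_n k_n^{\max(1,p/2)}/l_n=0$, replacing $X$ and $Y$ by their natural $1$-complemented subspaces spanned by $\Et_n:=\ell_2^{k_n}$ and $\Ft_n:=\ell_\infty^{l_n}$ respectively (the observation on injective push-forwards of ideals preceding Proposition~\ref{P:2.2.alt} shows this loses nothing). Then, invoking the probabilistic construction of RIP matrices, pick unit vectors $(x_{n,j})_{j=1}^{l_n}\subset\Et_n$ so that \eqref{E:RIP1} and \eqref{E:RIP2} hold for all $J\subset\{1,\dots,l_n\}$ with $\abs J\leq l_{n-1}$. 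Define
\[
T_n\colon\Et_n\to\Ft_n,\qquad T_n(x)=\sum_{j=1}^{l_n}\ip{x}{x_{n,j}}f_{n,j},
\]
where $(f_{n,j})_{j=1}^{l_n}$ is the unit vector basis of $\ell_\infty^{l_n}$; thus $x^*_{n,j}=\ip{\cdot}{x_{n,j}}$ and $y^*_{n,j}=f^*_{n,j}$.

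Next I would verify the general hypotheses. Condition~\eqref{E:2.2b} is immediate since $x^*_{n,j}(x_{n,j})=1$. For~\eqref{E:2.2a}, the RIP estimate~\eqref{E:RIP2} gives $\norm{T_n}\leq\sqrt 2$; combined with the lower $\ell_p$-estimate on $X$ and the upper $\ell_q$-estimate on $Y$ (and $p<q$), this shows that the formal diagonal $T$ extends to a bounded operator $X\to Y$. For~\eqref{E:2.2d}, note that \eqref{E:RIP1} gives $\bignorm{\sum_{i\in A}\sigma_i x_{m,i}}\leq\sqrt{2\abs A}$ whenever $\abs A\leq l_{m-1}$, and a crude bound $\sqrt{2l_m}\cdot\sqrt{\abs{A}/l_m}\cdot\ldots$ covers the remaining range; in any case $\varf_m(l)/l\to 0$ uniformly as $l\to\infty$.

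The key step is~\ref{E:2.3e.alt}, for which I would apply Lemma~\ref{L:RIP} with $s=2$, $t=p$. Condition~(a) of the lemma holds with $c_1=1$ because $\Et_m=\ell_2^{k_m}$. Condition~(b) is exactly~\eqref{E:RIP2}: for $m<n$ and $\abs A\leq l_m\leq l_{n-1}$, the sequence $(x^*_{n,j})_{j\in A}$ is $\sqrt 2$-weak $\ell_2$. Condition~(c) is the lower $\ell_p$-estimate on $X$, and condition~(d) is precisely $\lim_m k_m^{\max(1,p/2)}/l_m=0$, arranged at the outset.

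Finally, for condition~\ref{E:2.3d.alt}, fix $\vare>0$ and $M\in\infin\bn$. Given $x\in B_{X_N}$ with $\sup_{n,j}\abs{x^*_{n,j}(x)}\leq\delta$, the $n$-th block satisfies $\norm{T_n(P^X_n x)}_{\ell_\infty^{l_n}}\leq\delta$ and also $\norm{T_n(P^X_n x)}_{\ell_\infty^{l_n}}\leq\sqrt 2\norm{P^X_n x}$. Interpolating $\|\cdot\|^q\leq\delta^{q-p}2^{p/2}\norm{P^X_n x}^p$ (trivial modifications for $q=\infty$) and applying the upper $\ell_q$-estimate on $Y$ followed by the lower $\ell_p$-estimate on $X$, one obtains $\norm{T_N(x)}_Y\lesssim\delta^{(q-p)/q}$, which is less than $\vare$ for $\delta$ small enough; take $N=M$. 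With all hypotheses of Proposition~\ref{P:2.2.alt} in place, $\cL(X,Y)$ has $2^\continuum$ distinct closed ideals. The main obstacle is the interpolation in~\ref{E:2.3d.alt}, which is where the gap $p<q$ is genuinely used; the rest is essentially a matter of matching the RIP conditions to Lemma~\ref{L:RIP}.
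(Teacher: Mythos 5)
Your proposal matches the paper's proof essentially step for step: the same RIP-based choice of $E_n=\ell_2^{k_n}$, $F_n=\ell_\infty^{l_n}$ and operators $T_n(x)=(\ip{x}{x_{n,j}})_{j=1}^{l_n}$, the same instantiation of Lemma~\ref{L:RIP} with $s=2$ and $t=p$ (with conditions~(a)--(d) read off from~\eqref{E:RIP1}, \eqref{E:RIP2} and the lower $\ell_p$-estimate), and the same $\ell_p$-into-$\ell_q$ interpolation for condition~\ref{E:2.3d.alt}. The one spot that needs tightening is your verification of~\eqref{E:2.2d}: the ``crude bound'' $\sqrt{2l_m}\cdot\sqrt{\abs{A}/l_m}\cdots$ you gesture at is not a valid estimate once $\abs{A}>l_{m-1}$, since the RIP only controls subsets of size at most $l_{m-1}$; the paper instead partitions $A$ into $\ceil{l/k_n}$ blocks of size at most $k_n$, applies~\eqref{E:RIP1} blockwise, and obtains $\varf_m(l)/l<4k_n^{-1/2}$, and you should carry out some such partitioning argument explicitly. (Also, $\norm{T_n}\leq 1$ is immediate from the fact that the $x_{n,j}$ are unit vectors in $\ell_2^{k_n}$; you do not need~\eqref{E:RIP2} for that.)
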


\begin{proof}
  Choose  $k_1<l_1<k_2<l_2<\dots$ in $\bn$ with
  $\lim_{n\to\infty}k_n^{\max(1,p/2)} l_n^{-1}=0$ and unit vectors 
  $(x_{n,j})_{j=1}^{l_n}\subset\ell_2^{k_n}$ for all $n\in\bn$ to
  satisfy~\eqref{E:RIP1} and~\eqref{E:RIP2}. Let $E_n=\ell_2^{k_n}$
  and $F_n=\ell_\infty^{l_n}$ for all $n\in\bn$. As $E_n$ is a Hilbert
  space, we may take
  $(x^*_{n,j})_{j=1}^{l_n}=(x_{n,j})_{j=1}^{l_n}\subset
  S_{E^*_n}$. Suppose that $C_1,C_2>0$ are such that if
  $(x_n)_{n=1}^\infty\in X$ then
  $\big(\sum \norm{x_n}^{p}\big)^{1/p}\leq C_1\norm{(x_n)}_X$ and if
  $(y_n)_{n=1}^\infty\in Y$ then
  $\norm{(y_n)}_Y\leq C_2\big(\sum\norm{y_n}^{q}\big)^{1/q}$.
  
  For each $n\in\bn$ we define the operator
  $T_n\colon\ell_2^{k_n}\to\ell_\infty^{l_n}$ by
  $x\mapsto(\ip{x}{x_{n,j}})_{j=1}^{l_n}$. We now show that the
  conditions of Proposition~\ref{P:2.2.alt} are satisfied.

  We have that~\eqref{E:2.2a} is satisfied as if $(x_n)\in X$ then
  \begin{align*}
    \bignorm{T \big((x_n)\big)}_Y&\leq C_2 \left(\sum \norm{T_n
      x_n}^q_{\infty}\right)^{1/q}\\
    &= C_2 \left(\sum \sup_{1\leq j\leq l_n}\abs{\ip{x_n}{
        x_{n,j}}}^q\right)^{1/q}\\
    &\leq C_2 \left(\sum \norm{x_n}^q\right)^{1/q}\\
    &\leq C_2 \left(\sum \norm{x_n}^p\right)^{1/p} \leq C_2
    C_1\norm{(x_n)}_X\ .
  \end{align*}
  Thus, the map $(x_n)\mapsto T\big((x_n)\big)$ is well-defined and
  bounded. Condition~\eqref{E:2.2b} is trivially satisfied as
  $(x^*_{m,i})_{i=1}^{l_m}=(x_{m,i})_{i=1}^{l_m}$ for all $m\in\bn$.
  
  To prove~\eqref{E:2.2d}, fix $n\in\bn$, and let $l\in\bn$ be such
  that $l\geq l_n>k_n$. Given $m\in\bn$ with $l_m\geq l$ and
  $A\subset\{1,2,\dots,l_m\}$ with $\abs{A}=l$, set
  $t_n=\ceil{l/k_n}$.  Partition $A$ into $(A_j)_{j=1}^{t_n}$ such
  that $\abs{A_j}\leq k_n$ for all
  $1\leq j\leq t_n$. By~\eqref{E:RIP1} we have for all
  $1\leq j\leq t_n$ that
  \[
  \Bignorm{\sum_{i\in A_j}\sigma_i x_{m,i}}^2\leq 2
  \abs{A_j}\qquad\text{for all }(\sigma_i)_{i\in A_j}\subset
  \{\pm1\}\ .
  \]
  Thus, for all $(\sigma_i)_{i=1}^l\subset\{\pm1\}$ we have that
  \begin{align*}
    \Bignorm{\sum_{i\in A} \sigma_i x_{m,i}}&\leq \sum_{j=1}^{t_n}
    \Bignorm{\sum_{i\in A_j} \sigma_i  x_{m,i}}\\
    &\leq \sum_{j=1}^{t_n} 2^{1/2} \abs{A_j}^{1/2}\\
    &\leq t_n 2^{1/2} k_n^{1/2}\\
    &< (2l/k_n)2^{1/2} k_n^{1/2}<4lk_n^{-1/2}\ .
  \end{align*}
  Thus, for
  \[
  \varf_m(l)=\sup\Big\{\Bignorm{\sum_{i\in A} \sigma_i x_{m,i}}:\,
  A\subset\{1,2,\dots,l_m\},\ \abs{A}\leq l,\ (\sigma_i)_{i\in A}
  \subset\{\pm1\}\Big\}
  \]
  we have that $\frac{\varf_m(l)}{l}<4k_n^{-1/2}$. Hence,
  $\ds\lim_{l\to\infty} \sup_{m\in\bn,\ l_m\geq l}\frac{\varf_m(l)}l=0$,
  and we have~\eqref{E:2.2d}.
    
  We next verify condition~(a) of Proposition~\ref{P:2.2.alt}. Fix
  $\vare>0$.  There exists $\delta>0$ such that if $(a_j)\in\ell_p$
  with $\norm{(a_j)}_{\ell_p}\leq C_1$ and $\abs{a_j}\leq\delta$ for
  all $j\in\bn$ then $\norm{(a_j)}_{\ell_q}<C_2^{-1}\vare$. Let
  $x=(x_n)\in S_{X}$ such that
  $\sup_{1\leq j\leq l_n}\abs{\ip{x_n}{x_{n,j}}}\leq\delta$ for all
  $n\in\bn$. Thus, we have that
  \begin{equation}
    \label{E:upp}
    \Big(\sum_{n=1}^\infty \sup_{1\leq j\leq l_n}
    \abs{\ip{x_n}{x_{n,j}}}^p\Big)^{1/p}\leq (\sum_{n=1}^\infty
    \norm{x_n}^p)^{1/p}\leq C_1
  \end{equation}
  and
  \begin{align*}
    \bignorm{T\big((x_n)\big)}_Y&\leq C_2 \Big(\sum
    \norm{T_nx_n}^q_{\infty}\Big)^{1/q}\\
    &= C_2 \Big(\sum \sup_{1\leq j\leq
      l_n}\abs{\ip{x_n}{x_{n,j}}}^q\Big)^{1/q}\\
    <\vare \qquad\text{by~\eqref{E:upp} and our assumption on
      $\delta$}.
  \end{align*}
  Finally, it follows from~\eqref{E:RIP1} and~\eqref{E:RIP2} that the
  conditions of Lemma~\ref{L:RIP} are satisfied for $s=2$ and
  $t=p$. This in turn implies assumption~(b) of
  Proposition~\ref{P:2.2.alt}, and thus the proof is complete.
\end{proof}

\begin{rem}
  The earlier remark following the proof of
  Theorem~\ref{thm:ufdds-with-lower-upper-estimates} applies here,
  too. The closed ideals constructed above are all contained in the
  ideal of finitely strictly singular operators.
\end{rem}

Theorem~\ref{T:UP} gives the following immediate corollary.

\begin{cor}
  Let $1<p<\infty$. Then $\cL(\ell_p,\co)$, $\cL(\ell_1,\ell_p)$, and
  $\cL(\ell_p,\ell_\infty)$ each contain $2^{\continuum}$ distinct
  closed ideals.
\end{cor}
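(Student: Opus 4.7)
The plan is to derive each of the three cases from Theorem~\ref{T:UP} by identifying a suitable complemented-subspace pair; only the $\ell_1$-case then requires an additional duality step.

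\textbf{Case 1: $\cL(\ell_p,\co)$.} Apply Theorem~\ref{T:UP} with $V = \bigl(\bigoplus_n \ell_2^n\bigr)_{\ell_p}$, a complemented subspace of $\ell_p$ for all $1<p<\infty$, and $W = \co$, which is isomorphic to $\bigl(\bigoplus_n \ell_\infty^n\bigr)_{\co}$. Then $(\ell_2^n)$ is a UFDD of $V$ with lower $\ell_p$-estimate and $(\ell_\infty^n)$ is a UFDD of $W$ with upper $\ell_\infty$-estimate, so Theorem~\ref{T:UP} (with $q=\infty$) produces $2^\continuum$ distinct closed ideals in $\cL(V,W)$, which lift to $2^\continuum$ distinct closed ideals in $\cL(\ell_p,\co)$ via the injection $\cJ\mapsto\cJt$ described before Proposition~\ref{P:2.2.alt}.

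\textbf{Case 2: $\cL(\ell_p,\ell_\infty)$.} Post-compose the operators $T_N$ from Case~1 with the isometric inclusion $i\colon\co\hookrightarrow\ell_\infty$ and rerun the separation argument of Proposition~\ref{P:2.1}, replacing $\Psi_n$ by the extended functionals $\widetilde\Psi_n(S) = \tfrac{1}{l_n}\sum_i y^*_{n,i}(Sx_{n,i})$ on $\cL(\ell_p,\ell_\infty)$, where each $y^*_{n,i}\in\co^* = \ell_1$ is now viewed as a functional on $\ell_\infty$ via the natural embedding $\ell_1\hookrightarrow\ell_\infty^*$. Condition~\eqref{E:2.2}(b) still bounds $\bigabs{\widetilde\Psi_n(A(iT_N)B)}$ by $\norm{A}\cdot\tfrac{1}{l_n}\sum_i\norm{T_NBx_{n,i}}$ for arbitrary $A\in\cL(\ell_\infty)$ and $B\in\cL(\ell_p)$, giving $\dist(iT_M,\cJ^{iT_N})\geq 1$ whenever $M\setminus N$ is infinite, so Proposition~\ref{P:2.1} yields $2^\continuum$ distinct closed ideals in $\cL(\ell_p,\ell_\infty)$.

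\textbf{Case 3: $\cL(\ell_1,\ell_p)$.} Apply Case~1 with $p$ replaced by its conjugate $p'$ to obtain operators $T_N\in\cL(V',\co)$, where $V' = \bigl(\bigoplus_n \ell_2^n\bigr)_{\ell_{p'}}$, generating $2^\continuum$ distinct closed ideals. Taking adjoints gives $T_N^*\in\cL(\ell_1,(V')^*)$, and $(V')^* = \bigl(\bigoplus_n\ell_2^n\bigr)_{\ell_p}$ is complemented in $\ell_p$. To show the $T_N^*$ still generate $2^\continuum$ distinct closed ideals, define dual functionals $\Psi_n'(R) = \tfrac{1}{l_n}\sum_i (Ry^*_{n,i})(x_{n,i})$ on $\cL(\ell_1,(V')^*)$. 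Direct calculation gives $\Psi_n'(T_N^*) = \Psi_n(T_N)\geq 1$ for $n\in N$; and for $A\in\cL((V')^*)$, $B\in\cL(\ell_1)$, reflexivity of $V'$ lets one rewrite $\Psi_n'(AT_N^*B) = \tfrac{1}{l_n}\sum_i(By^*_{n,i})(T_NA^*x_{n,i})$, which is bounded by $\norm{B}\cdot\tfrac{1}{l_n}\sum_i\norm{T_NA^*x_{n,i}}$ and tends to zero along $m\in M\setminus N$ by~\eqref{E:2.2}(b). Proposition~\ref{P:2.1} then gives $2^\continuum$ distinct closed ideals in $\cL(\ell_1,(V')^*)$, and a final lift through the complemented inclusion $(V')^*\subset\ell_p$ delivers the same for $\cL(\ell_1,\ell_p)$. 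The main obstacle is precisely this duality step, since the adjoint map does not in general send closed ideals of $\cL(X,Y)$ to closed ideals of $\cL(Y^*,X^*)$; what rescues the argument is that the concrete separating functionals behind~\eqref{E:2.2} dualise verbatim, with reflexivity of $V'$ removing the only obstruction.
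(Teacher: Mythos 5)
Your proof is correct and takes essentially the same route as the paper: derive $\cL(\ell_p,\co)$ from Theorem~\ref{T:UP} using that $\big(\bigoplus_n\ell_2^n\big)_{\ell_p}$ is isomorphic to $\ell_p$, and then pass to $\cL(\ell_p,\ell_\infty)$ and $\cL(\ell_1,\ell_p)$ by duality. The paper compresses that final step to the phrase ``by duality''; your spelling-out --- the Remark-style extension of the $y^*_{n,j}$ along $\co\hookrightarrow\ell_\infty$, and the reflexivity-based identity $\Psi_n'(AT_N^*B)=\frac1{l_n}\sum_i(By^*_{n,i})(T_NA^*x_{n,i})$ that circumvents the failure of adjoints to send closed ideals to closed ideals --- is precisely what that shorthand conceals, and matches the paper's earlier Remark after condition~\eqref{E:2.2}.
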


\begin{proof}
  We have by Theorem~\ref{T:UP} that
  $\cL\big(\big(\bigoplus_{n=1}^\infty \ell_2^{n}\big)_{\ell_p},\co\big)$
  contains $2^{\continuum}$ distinct closed ideals, and
  $\big(\bigoplus_{n=1}^\infty\ell_2^{n}\big)_{\ell_p}$ is isomorphic
  to $\ell_p$ for $1<p<\infty$. By duality we have that
  $\cL(\ell_1,\ell_p)$ and $\cL(\ell_p,\ell_\infty)$ each contain
  $2^{\continuum}$ distinct closed ideals.
\end{proof}

In the previous section we deduced from our results that the
cardinality of the lattice of closed ideals of $\cL(L_p)$, $1<p\neq
2<\infty$, is $2^{\continuum}$. Note that the 
Hardy space $H_1$ and its predual VMO can be seen as the
``well-behaved'' limit cases of the $L_p$-spaces. For example $\ell_2$
is complemented in both spaces, and $H_1$ contains a complemented copy
of $\ell_1$ and VMO a complemented copy of $\co$
(\textit{cf.}~\cite{M1} and~\cite{M2}*{page~125}). Thus, we deduce
the following corollary.

\begin{cor}
  The  cardinality of the lattice of closed ideals of
  $\cL(\text{VMO})$ and $\cL(H_1)$ is $2^{\continuum}$.
\end{cor}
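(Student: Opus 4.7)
The plan is to combine two ingredients already in place: the ideal-transfer observation from Section~\ref{sec:general-condition} (that a closed ideal $\cJ$ of $\cL(V,W)$ with $V,W$ complemented in $Z$ lifts injectively to a closed ideal $\cJt$ of $\cL(Z)$), and the corollary to Theorem~\ref{T:UP}, which supplies $2^{\continuum}$ distinct closed ideals in $\cL(\ell_2,\co)$ and in $\cL(\ell_1,\ell_2)$. Together with the complementation facts already recalled just before the corollary, these immediately pin down the cardinalities.

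First, the upper bound is routine: VMO and $H_1$ are separable Banach spaces, so $\cL(\text{VMO})$ and $\cL(H_1)$ each have cardinality at most $\continuum$, and therefore admit at most $2^{\continuum}$ closed subsets, in particular at most $2^{\continuum}$ closed ideals.

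For the lower bound on $\cL(\text{VMO})$, I would take $V=\ell_2$ and $W=\co$. Both are complemented in VMO by the references cited in the remark preceding the statement. By the corollary to Theorem~\ref{T:UP} applied with $p=2$, $\cL(\ell_2,\co)$ has $2^{\continuum}$ distinct closed ideals. Applying the injection $\cJ\mapsto\cJt$ from Section~\ref{sec:general-condition} with $X=Y=\text{VMO}$, $V=\ell_2$, $W=\co$ produces $2^{\continuum}$ distinct closed ideals in $\cL(\text{VMO})$. The argument for $\cL(H_1)$ is identical except that now one uses $V=\ell_1$ and $W=\ell_2$, both of which are complemented in $H_1$, together with the same corollary to Theorem~\ref{T:UP} which gives $2^{\continuum}$ closed ideals in $\cL(\ell_1,\ell_2)$.

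There is essentially no obstacle: every nontrivial ingredient has been proved earlier in the paper or cited in the remark. The only point worth checking carefully is that the ideal-transfer map is genuinely injective when $X=Y=Z$ (and hence the $V$ and $W$ can coexist as complemented subspaces of the same ambient space), but this is exactly the content of the routine observation stated after condition~\eqref{E:2.2d}; the projections onto $V$ and $W$ inside $Z$ provide the $A_j$ and $B_j$ needed to recover any $S\in\cL(V,W)$ from the corresponding element of $\cJt$, giving injectivity of $\cJ\mapsto\cJt$ at once.
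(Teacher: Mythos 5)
Your proof is correct and matches the paper's implicit argument: the authors' preceding remark records precisely that $\ell_2$ and $\co$ are complemented in VMO while $\ell_1$ and $\ell_2$ are complemented in $H_1$, and the corollary is then deduced from the corollary to Theorem~\ref{T:UP} (with $p=2$) via the routine ideal-transfer observation from Section~\ref{sec:general-condition}, just as you describe. The separability upper bound you add is correct and standard.
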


\section{Final remarks and open problems}

If one only considers Banach spaces $X$ with an unconditional basis
or, more generally, with an UFDD, then the cardinalities $\kappa$ for
which we know examples of Banach spaces $X$ with an UFDD for which
the number of non-trivial proper closed ideals of $\cL(X)$ is $\kappa$,
are only the following three: 
\begin{mylist}{$\kappa=2^\continuum$}
\item[$\kappa=1$]
  For $X=\ell_p$, $1\leq p<\infty$, or $X=c_0$, the closed ideal of
  compact operators is the only non-trivial proper closed
  ideal~\cite{goh-mar-fel:60}.
\item[$\kappa=2$]
  For the spaces $X=\big(\bigoplus\ell_2^n\big)_{c_0}$ and its dual
  $X^*=\big(\bigoplus\ell_2^n\big)_{\ell_1}$, there are exactly two
  non-trivial proper closed ideals, the compacts and the closure of
  operators which factor through $c_0$ or $\ell_1$,
  respectively~\cites{laus-loy-read:04,laus-schlump-zsak:06}.
\item[$\kappa=2^\continuum$]
  $\cL(X)$ has $2^\continuum$ closed ideals for the spaces listed in
  the previous  two sections. In addition to these spaces, it was
  recently observed by Johnson 
  that also $\cL(T)$, where $T$ is Tsirelson space, has
  $2^{\continuum}$ closed ideals.
\end{mylist}
This begs the   following  questions:

\begin{problem}
  \label{prob:spaces-with-ufdd}
  Are there Banach spaces $X$ with an unconditional basis or
  unconditional UFDD for which the cardinality of the non-trivial
  proper closed ideals of $\cL(X)$ is strictly between $2$ and
  $2^\continuum$? Can this cardinality be any natural number,
  countable infinite or $\continuum$?
\end{problem}
An interesting space in the context of this question is
$c_0\oplus\ell_1$. According to~\cite{sirotkin-wallis:16},
$\cL(c_0\oplus\ell_1)$ contains an $\omega_1$-chain of closed ideals.
\begin{problem}
  \label{prob:c_0-plus_l_1}
  What is the cardinality of the lattice closed ideals of
  $\cL(c_0\oplus\ell_1)$?
\end{problem}
An other space of interest for Problem~\ref{prob:spaces-with-ufdd} is
the Schreier space. In~\cite{bkl:20} it was shown that the space of
operators on this space has continuum many maximal ideals.
\begin{problem}
\label{prob:schreier}
  What is the cardinality of the lattice of closed ideals of the space
  of operators on Schreier space?
\end{problem}

Among the class of general separable Banach spaces, there are more
examples for which the lattice of closed ideals of their algebra of
operators, or at least its cardinality, is determined. Such a list can
be found in~\cite{kania-lausten:17}.
 
Based on the construction by Argyros and
Haydon~\cite{argyros-haydon:11} of a space on which all operators are
compact perturbations of multiples of the identity,
Tarbard~\cite{tarbard:12} constructed for each $n\in\bn$ a space
$X_n$ for which $\cL(X_n)$ has exactly $n$ non-trivial proper closed
ideals. There are also Banach spaces $X$ for which the cardinality of
the lattice of closed ideals of $\cL(X) $ is exactly
$\continuum$. Indeed, suppose that $A$ is a separable Banach algebra
isomorphic to the Calkin algebra $\cL(X)/\cK(X)$ for a Banach space
$X$ which has the approximation property (to ensure $\cK(X)$ is the
smallest non-trivial closed ideal). Then, as observed
in~\cite{kania-lausten:17}, the closed ideals of $\cL(X)$ arise from
preimages of closed ideals in $A$. Examples of separable Banach spaces
$X$ for which $\cL(X)/\cK(X)$ has exactly continuum many closed ideals
were constructed, for instance,
in~\cite{motakis-puglisi-zisi:16} and~\cite{tarbard:13}. An
example of a space $X$ for which the number of closed ideals is
infinite but countable seems to be missing.
\begin{problem}
  \label{prob:aleph_0}
  Are there Banach spaces $X$ for which $\cL(X)$ has countably
  infinitely many closed ideals?
\end{problem}
A candidate of such a space is $C[0,\alpha]$, where $\alpha$ is a
large enough countable ordinal. But already for $\alpha=\omega^\omega$
(the first ordinal $\alpha$ for which $C[0,\alpha]$ is not isomorphic
to $c_0$) the answer of the following question is not known.
\begin{problem}
  \label{prob:cont-fns-on-countable-ordinal}
  For a countable ordinal $\alpha$, what are closed ideals of
  $\cL\big(C[0,\alpha]\big)$? What is the cardinality of the lattice
  of these ideals?
\end{problem}
Another space of interest is $L_1[0,1]$. It was shown by Johnson,
Pisier and Schechtman~\cite{john-pis-schec:20} that
$\cL(L_1[0,1])$ has at least $\continuum$ closed ideals.
\begin{problem}
  \label{prob:L_1}
  How many closed ideals does $\cL(L_1[0,1])$ have?
\end{problem}
As alluded by our terminology, the closed ideals of $\cL(X,Y)$ for
Banach spaces $X$ and $Y$ form a lattice with respect to inclusion and
with lattice operations given by $\cI\meet\cJ=\cI\cap\cJ$ and
$\cI\join\cJ=\cl{\cI+\cJ}$ for closed ideals $\cI$ and $\cJ$. In the
problems above, we have only asked about the cardinality of this
lattice. As a future ambitious target, one could study the lattice
structure.

\begin{bibdiv}
  \begin{biblist}

    \bib{argyros-haydon:11}{article}{
      author={Argyros, Spiros A.},
      author={Haydon, Richard G.},
      title={A hereditarily indecomposable $\scr L_\infty $-space that
        solves the scalar-plus-compact problem},
      journal={Acta Math.},
      volume={206},
      date={2011},
      number={1},
      pages={1--54},
    }

    \bib{bkl:20}{article}{
      author={Beanland, Kevin},
      author={Kania, Tomek},
      author={Laustsen, Niels Jakob},
      title={Closed ideals of operators on the Tsirelson and Schreier
        spaces},
      note={to appear in J. Funct. Anal. }
    }

    \bib{bourgain-rosenthal-schechtman:81}{article}{
      author={Bourgain, Jean},
      author={Rosenthal, Haskell P.},
      author={Schechtman, Gideon},
      title={An ordinal $L^{p}$-index for Banach spaces, with application to
        complemented subspaces of $L^{p}$},
      journal={Ann. of Math. (2)},
      volume={114},
      date={1981},
      number={2},
      pages={193--228},
      issn={0003-486X},
      review={\MR{632839}},
      doi={10.2307/1971293},
    }

    \bib{cas-shur:89}{book}{
      author={Casazza, Peter G.},
      author={Shura, Thaddeus J.},
      title={Tsirel\cprime son's space},
      series={Lecture Notes in Mathematics},
      volume={1363},
      note={With an appendix by J. Baker, O. Slotterbeck and R. Aron},
      publisher={Springer-Verlag},
      place={Berlin},
      date={1989},
      pages={viii + 204},
    }

    \bib{foucart-rauhut:13}{book}{
      author={Foucart, Simon},
      author={Rauhut, Holger},
      title={A mathematical introduction to compressive sensing},
      series={Applied and Numerical Harmonic Analysis},
      publisher={Birkh\"auser/Springer, New York},
      date={2013},
      pages={xviii+625},
      isbn={978-0-8176-4947-0},
      isbn={978-0-8176-4948-7},
      review={\MR{3100033}},
      doi={10.1007/978-0-8176-4948-7},
    }

    \bib{fsz:17}{article}{
      author={Freeman, Daniel},
      author={Schlumprecht, Thomas},
      author={Zs\'{a}k, Andr{\'a}s},
      title={Closed ideals of operators between the classical sequence
        spaces},
      journal={Bull. Lond. Math. Soc.},
      volume={49},
      date={2017},
      number={5},
      pages={859--876},
    }

        \bib{fsz:20}{article}{
      author={Freeman, Daniel},
      author={Schlumprecht, Thomas},
      author={Zs\'{a}k, Andr\'as},
      title={The cardinality of the sublattice of closed ideals of
        operators between certain classical sequence spaces},
      note={preprint, arXiv:2006.02421},
    }

    \bib{goh-mar-fel:60}{article}{
      author={Gohberg, Israel C.},
      author={Markus, Alexander S.},
      author={Fel{\cprime }dman, Israel},
      title={Normally solvable operators and ideals associated with
        them},
      language={Russian, with Moldavian summary},
      journal={Bul. Akad. \v Stiince RSS Moldoven.},
      volume={1960},
      date={1960},
      number={10 (76)},
      pages={51--70},
    }
    
    \bib{john-pis-schec:20}{article}{
      author={Johnson, William B.},
      author={Pisier, Gilles},
      author={Schechtman, Gideon},
      title={Ideals in $L(L_1)$},
      journal={Math. Ann.},
      volume={376},
      date={2020},
      number={1-2},
      pages={693--705},
    }

    \bib{js:20}{article}{
      author={Johnson, William B.},
      author={Schechtman,  Gideon},
      title={The number of closed Ideals in $L(L_p)$},
      note={preprint, arXiv:2003.11414 },
    }
    
    \bib{kania-lausten:17}{article}{
      author={Kania, Tomasz},
      author={Laustsen, Niels Jakob},
      title={Ideal structure of the algebra of bounded operators
        acting on a Banach space},
      journal={Indiana Univ. Math. J.},
      volume={66},
      date={2017},
      number={3},
      pages={1019--1043},
    }
    
    \bib{laus-loy-read:04}{article}{
      author={Laustsen, Niels Jakob},
      author={Loy, Richard J.},
      author={Read, Charles J.},
      title={The lattice of closed ideals in the Banach algebra of
        operators on certain Banach spaces},
      journal={J. Funct. Anal.},
      volume={214},
      date={2004},
      number={1},
      pages={106--131},
    }
    
    \bib{laus-schlump-zsak:06}{article}{
      author={Laustsen, Niels Jakob},
      author={Schlumprecht, Thomas},
      author={Zs{\'a}k, Andr{\'a}s},
      title={The lattice of closed ideals in the Banach algebra of
        operators on a certain dual Banach space},
      journal={J. Operator Theory},
      volume={56},
      date={2006},
      number={2},
      pages={391--402},
    }
    
    \bib{motakis-puglisi-zisi:16}{article}{
      author={Motakis, Pavlos},
      author={Puglisi, Daniele},
      author={Zisimopoulou, Despoina},
      title={A hierarchy of Banach spaces with $C(K)$ Calkin
        algebras},
      journal={Indiana Univ. Math. J.},
      volume={65},
      date={2016},
      number={1},
      pages={39--67},
    }

    \bib{M1}{article}{
      author={M\"{u}ller, Paul F. X.},
      title={A family of complemented subspaces in VMO and its
        isomorphic classification},
      journal={Israel J. Math.},
      volume={134},
      date={2003},
      pages={289--306},
    }
    
    \bib{M2}{book}{
      author={M\"{u}ller, Paul F. X.},
      title={Isomorphisms between $H^1$ spaces},
      series={Instytut Matematyczny Polskiej Akademii Nauk. Monografie
        Matematyczne (New Series) [Mathematics Institute of the Polish
          Academy of Sciences. Mathematical Monographs (New Series)]},
      volume={66},
      publisher={Birkh\"{a}user Verlag, Basel},
      date={2005},
      pages={xiv+453},
    }
    
    \bib{pietsch:80}{book}{
      author={Pietsch, Albrecht},
      title={Operator ideals},
      series={North-Holland Mathematical Library},
      volume={20},
      publisher={North-Holland Publishing Co., Amsterdam-New York},
      date={1980},
      pages={451},
    }
    
    \bib{rosenthal:70a}{article}{
      author={Rosenthal, Haskell P.},
      title={On the subspaces of $L^{p}$ $(p>2)$ spanned by sequences
        of independent random variables},
      journal={Israel J. Math.},
      volume={8},
      date={1970},
      pages={273--303},
    }
    
    \bib{sari-schlump-tomczak-troitsky:07}{article}{
      author={Sari, B\"unyamin},
      author={Schlumprecht, Thomas},
      author={Tomczak-Jaegermann, Nicole},
      author={Troitsky, Vladimir G.},
      title={On norm closed ideals in $L(l_p,l_q)$},
      journal={Studia Math.},
      volume={179},
      date={2007},
      number={3},
      pages={239--262},
    }
    
    \bib{schlump:12}{article}{
      author={Schlumprecht, Thomas},
      title={On the closed subideals of $L(\ell _p\oplus \ell _q)$},
      journal={Oper. Matrices},
      volume={6},
      date={2012},
      number={2},
      pages={311--326},
    }
        
    \bib{sz:18}{article}{
      author={Schlumprecht, Thomas},
      author={Zs\'{a}k, Andr\'{a}s},
      title={The algebra of bounded linear operators on $\ell _p\oplus
        \ell _q$ has infinitely many closed ideals},
      journal={J. Reine Angew. Math.},
      volume={735},
      date={2018},
      pages={225--247},
    }

    \bib{sirotkin-wallis:16}{article}{
      author={Sirotkin, Gleb},
      author={Wallis, Ben},
      title={Sequence-singular operators},
      journal={J. Math. Anal. Appl.},
      volume={443},
      date={2016},
      number={2},
      pages={1208--1219},
    }

    \bib{tarbard:12}{article}{
      author={Tarbard, Matthew},
      title={Hereditarily indecomposable, separable $\scr L_\infty $
        Banach spaces with $\ell _1$ dual having few but not very few
        operators},
      journal={J. Lond. Math. Soc. (2)},
      volume={85},
      date={2012},
      number={3},
      pages={737--764},
    }
    
    \bib{tarbard:13}{book}{
      author={Tarbard, Matthew},
      title={Operators on banach spaces of Bourgain-Delbaen type},
      note={Thesis (D.Phil.)--University of Oxford (United Kingdom)},
      publisher={ProQuest LLC, Ann Arbor, MI},
      date={2013},
      pages={(no paging)},
    }
    
    \bib{wallis:15}{article}{
      author={Wallis, Ben},
      title={Closed ideals in $\mathcal L(X)$ and $\mathcal L(X^*)$
        when $X$ contains certain copies of $\ell _p$ and $c_0$},
      journal={Oper. Matrices},
      volume={10},
      date={2016},
      number={2},
      pages={285--318},
    }    
    

  \end{biblist}
\end{bibdiv}

\end{document}